\newcommand{\Hmm}[1]{\leavevmode{\marginpar{\tiny%
$\hbox to 0mm{\hspace*{-0.5mm}$\leftarrow$\hss}%
\vcenter{\vrule depth 0.1mm height 0.1mm width \the\marginparwidth}%
\hbox to 0mm{\hss$\rightarrow$\hspace*{-0.5mm}}$\\\relax\raggedright #1}}}
\newcommand{\nc}{\newcommand}
\nc{\les}{\lesssim}
\nc{\nit}{\noindent}
\nc{\nn}{\nonumber}
\nc{\D}{\partial}
\nc{\diff}[2]{\frac{d #1}{d #2}}
\nc{\diffn}[3]{\frac{d^{#3} #1}{d {#2}^{#3}}}
\nc{\pdiff}[2]{\frac{\partial #1}{\partial #2}}
\nc{\pdiffn}[3]{\frac{\partial^{#3} #1}{\partial{#2}^{#3}}}
\nc{\abs}[1] {\lvert #1 \rvert}
\nc{\cAc}{{\cal A}_c}
\nc{\cE}{{\cal E}}
\nc{\cF}{{\mathcal F}}
\nc{\cP}{{\cal P}}
\nc{\cV}{{\cal V}}
\nc{\cQ}{{\cal Q}}
\nc{\cGin}{{\cal G}_{\rm in}}
\nc{\cGout}{{\cal G}_{\rm out}}
\nc{\cO}{{\cal O}}
\nc{\Lav}{{\cal L}_{\rm av}}
\nc{\cL}{{\cal L}}
\nc{\cB}{{\cal B}}
\nc{\cZ}{{\cal Z}}
\nc{\cR}{{\cal R}}
\nc{\cT}{{\cal T}}
\nc{\cY}{{\cal Y}}
\nc{\cX}{{\cal X}}
\nc{\cXT}{{{\cal X}(T)}}
\nc{\cBT}{{{\cal B}(T)}}
\nc{\vD}{{\vec \mathcal{D}}}
\nc{\efield}{\mathcal{E}}
\nc{\vE}{{\vec \efield}}
\nc{\vB}{{\vec \mathcal{B}}}
\nc{\vH}{{\vec \mathcal{H}}}
\nc{\ty}{{\tilde y}}
\nc{\tu}{{\tilde u}}
\nc{\tV}{{\tilde V}}
\nc{\Pc}{{\bf P_c}}
\nc{\bx}{{\bf x}}
\nc{\bX}{{\bf X}}
\nc{\bXYZ}{{\bf XYZ}}
\nc{\bY}{{\bf Y}}
\nc{\bF}{{\bf F}}
\nc{\bS}{{\bf S}}
\nc{\dV}{{\delta V}}
\nc{\dE}{{\delta E}}
\nc{\TT}{{\Theta}}
\nc{\dPsi}{{\delta\Psi}}
\nc{\order}{{\cal O}}
\nc{\Rout}{R_{\rm out}}
\nc{\eplus}{e_+}
\nc{\eminus}{e_-}
\nc{\epm}{e_\pm}
\nc{\eps}{\varepsilon}
\nc{\vnabla}{{\vec\nabla}}
\nc{\G}{\Gamma}
\nc{\w}{\omega}
\nc{\mh}{h}
\nc{\mg}{g}
\nc{\vphi}{\varphi}
\nc{\tlambda}{\tilde\lambda}
\nc{\be}{\begin{equation}}
\nc{\ee}{\end{equation}}
\nc{\ba}{\begin{eqnarray}}
\nc{\ea}{\end{eqnarray}}
\nc{\g}{\gamma}
\nc{\ol}{\overline}
\newtheorem{theorem}{Theorem}[section]
\newtheorem{lemma}[theorem]{Lemma}
\newtheorem{prop}[theorem]{Proposition}
\newtheorem{corollary}[theorem]{Corollary}
\newtheorem{defin}[theorem]{Definition}
\nc{\pT}{\partial_T}
\nc{\pz}{\partial_z}
\nc{\pt}{\partial_t}
\nc{\la}{\langle}
\nc{\ra}{\rangle}
\nc{\infint}{\int_{-\infty}^{\infty}}
\nc{\halfwidth}{6.5cm}
\nc{\figwidth}{10cm}
\nc{\nlayers}{L} \nc{\nsectors}{M}
\nc{\indicator}{\mathbf{1}}
\nc{\Rhole}{R_{\rm hole}}
\nc{\Rring}{R_{\rm ring}}
\nc{\neff}{n_{\rm eff}}
\nc{\Frem}{F_{\rm rem}}
\nc{\Real}{\mathbb R}
\nc{\Z}{\mathbb Z}
\nc{\DD}{\Delta}
\nc{\cD}{\mathcal D}
\nc{\lnorm}{\left\|}
\nc{\rnorm}{\right\|}
\nc{\rnormp}{\right\|_{\ell^{p,\eps}}}
\nc{\rar}{\rightarrow}
\nc{\sgn}{{\rm sign}}
\date{\today}
\begin{document}
\title[Long time dynamics for forced-damped KdV]{Long time dynamics for forced and weakly damped   KdV on the torus}

\author{M.~B.~Erdo\smash{\u{g}}an and  N.~Tzirakis}
\thanks{The authors were partially supported by NSF grants DMS-0900865 (B.~E.), and DMS-0901222 (N.~T.) }

\address{Department of Mathematics \\
University of Illinois \\
Urbana, IL 61801, U.S.A.}

\email{berdogan@uiuc.edu \\ tzirakis@math.uiuc.edu }

\begin{abstract}
The forced and weakly damped   Korteweg-de Vries (KdV) equation with periodic boundary conditions is considered.
Starting from $L^2$ and mean-zero initial data we prove that the solution decomposes into two parts; a linear one which decays to zero as time goes to infinity and a nonlinear one which always belongs to a smoother space. As a corollary we prove that all solutions are attracted by a ball in $H^s$, $s\in(0,1)$, whose radius depends only on $s$, the $L^2$ norm of the forcing term and the damping parameter.
This gives a  new proof for the existence of a smooth global attractor and provides quantitative information on the size of the attractor set in $H^s$.
\end{abstract}

\maketitle

\section{Introduction}
In this paper we study forced and weakly damped Korteweg de Vries (KdV) equation on the torus:
\begin{align}\label{eq:kdv}
& u_{t}+u_{xxx} +\gamma u + uu_{x}=f, \,\,\,\, t\in {\mathbb R},\,\,  x \in {\mathbb T}=\mathbb R/2\pi\mathbb Z, \\
& u(x,0)=u_{0}(x)\in \dot L^2({\mathbb T}):=\{g\in L^2(\mathbb T): \int_{\mathbb T} g(x) dx=0\},\nn
\end{align}
where throughout the paper $\gamma>0$ and $f\in \dot L^2$. We also assume that $u$ and $f$ are real valued.

Note that forced and damped KdV does not satisfy  momentum conservation. However, since
$$
\partial_t\int_{\mathbb T} u(x,t)dx=-\gamma\int_{\mathbb T} u(x,t)dx,\,\,\,\,\,\,\,\,\, \int_{\mathbb T} u(x,0)dx=0,
$$
the solution $u$ is mean-zero at each time.

The local well-posedness theory for \eqref{eq:kdv}  can be derived by making simple modifications in the well-posedness theory of the KdV equation ($\gamma=f=0$).  Recall that for  a Banach space $X$, and starting with initial data $u_{0} \in H^{s}({\mathbb T})$, one says that the equation \eqref{eq:kdv} is locally well-posed, if there exists $T>0$ and a unique solution $u \in X \cap C_{t}^{0}H_{x}^{s}([0,T]\times \mathbb T)$. One also demands that there is continuity with respect to the initial data in the appropriate topology. If $T$ can be taken to be arbitrarily large then the problem is globally well-posed.

Local well-posedness for the KdV equation on the torus for nonsmooth data was first obtained by Bourgain, \cite{Bou2}. He proved that the KdV equation is locally well-posed in $L^{2}(\mathbb T)$. Later, in \cite{kpv}, Kenig, Ponce, Vega extended the  theory to
$H^s$, $s>-\frac12$. The local theory at $H^{-1/2}$ level was established by Colliander, Keel, Staffilani, Takaoka, Tao, \cite{ckstt}. All these results use the contraction mapping principle and the $X^{s,b}$ spaces  of Bourgain. These methods also apply to the equation \eqref{eq:kdv} as we describe in Section~\ref{sec:wellposed}.

The local $L^2$ solutions of Bourgain are in fact global since for the KdV equation the $L^2$ norm is conserved.
In \cite{ckstt}, it was proved that KdV is globally well-posed in
$H^{s}(\mathbb T)$ for any $s \geq -\frac{1}{2}$.   To extend  the local solutions globally in time they used the ``I-method'', developing a theory of almost conserved quantities starting with the energy. Although the initial data have infinite energy they showed that a smoothed out version of the solution cannot increase much in energy  going from one local-in-time interval to another.   In \cite{kt},   Kappeler and   Topalov extended the latter result using the integrability properties of the equation and proved that   KdV admits global solutions in $H^{s}(\mathbb T)$ for any $s \geq -1$. In \cite{bit}, Babin, Ilyin, ant Titi gave a new proof of the $L^2$ theorem of Bourgain using normal form methods. Similar ideas were developed by Shatah \cite{sha}.

For forced and weakly damped KdV, the conservation of energy does not hold. Nevertheless, the energy remains bounded for positive times and in the long run it is  less then $ 2 \|f\|/\gamma$, where $\|\cdot\|:=\|\cdot\|_{L^2(\mathbb T)}$. Therefore global well-posedness for \eqref{eq:kdv} in $L^2$ follows immediately. For the global well-posedness theory below $L^2$ see \cite{tsugawa} and the references therein.

To obtain the energy bound, note that
\begin{align*}
\partial_t\int u^2dx& =2\int u \partial_t u dx
= -2 \int u u_{xxx}dx-\int u\partial_x u^2 dx -2\gamma \int u^2 dx
+2\int f u dx\\ &= -2\gamma \int u^2 dx
+2\int f u dx.
\end{align*}
Setting $h(t)=e^{2\gamma t} \|u\|^2$, and using Cauchy Schwarz inequality in the integral $\int fu dx$, we obtain
$$
h^\prime(t)\leq 2 e^{\gamma t} \|f\| \sqrt{h(t)}.
$$
This implies that
$$
\partial_t \sqrt{h(t)}\leq e^{\gamma t} \|f\| .
$$
Therefore, for positive times
\be\label{L2bound}
\|u(t)\| \leq e^{-\gamma t} \|u_0\| + \frac{\|f\| }{\gamma}(1-e^{-\gamma t}).
\ee
Thus for $t>T=T(\gamma, \|u_0\|,\|f\| )$,  we have $ \|u(t)\|< 2  \|f\| /\gamma $. Also note  that this implies
that the set $\{g:\|g\|\leq \|f\|/\gamma\}$ is invariant under the flow.

We now state the main result of this paper:
\begin{theorem}\label{theo:main1}
Fix $s \in(0,1)$. Consider the  forced and weakly damped  KdV equation \eqref{eq:kdv}
on $\mathbb{T}\times \mathbb R$ with   $u(x,0)=u_0(x)\in\dot L^2$.
Then $u(t)-e^{-\gamma t}e^{tL}u_0\in C^0_tH^{s}_x$ and
\[
\|u(t)- e^{-\gamma t}e^{tL}u_0\|_{H^{s}} \leq  C(s ,\gamma, \|u_0\|, \|f\| ).
\]
where $L=-\partial_x^3 $.
\end{theorem}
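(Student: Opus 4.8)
The idea is to use the standard splitting-of-the-solution (normal form / differentiation by parts) technique adapted to the forced-damped setting. First I would pass to Fourier coefficients and remove the linear part: write $u(x,t)=e^{-\gamma t}\sum_k e^{ik^3 t}a_k(t)e^{ikx}$ (the mean-zero condition kills $k=0$), so that $v(t):=u(t)-e^{-\gamma t}e^{tL}u_0$ corresponds to $a_k(t)-a_k(0)$. Duhamel's formula for $a_k$ gives an integral of the quadratic nonlinearity with resonance factor $e^{i(k^3-k_1^3-k_2^3)t}$, together with a forcing contribution $\int_0^t e^{\gamma\tau}e^{-ik^3\tau}\hat f(k)\,d\tau$. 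The key algebraic fact on the torus is the factorization $k^3-k_1^3-k_2^3=3kk_1k_2$ whenever $k=k_1+k_2$, which makes the oscillation nondegenerate on the nonresonant set; one integrates by parts in time in the Duhamel term, gaining a factor $\frac{1}{kk_1k_2}$ at the cost of moving a time derivative onto $a_{k_1}a_{k_2}$ (hence onto the equation itself, producing cubic and quartic boundary/interior terms) and onto the $e^{-\gamma t}$ weights (harmless, since $\partial_t e^{-2\gamma t}$ only improves decay).

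Concretely, the steps are: (i) derive the Duhamel equation for $\{a_k\}$ including the damping weight and the forcing term; (ii) split the bilinear sum into the resonant part ($kk_1k_2=0$, i.e. essentially the $k_1=-k_2$ type interactions, which after using mean-zero reduce to a manageable diagonal term — here the weight $e^{-\gamma t}$ and the a priori $L^2$ bound \eqref{L2bound} are what make it summable) and the nonresonant part; (iii) on the nonresonant part integrate by parts in $\tau$, producing a boundary term bounded in $H^s$ for $s<1$ by the $1/(kk_1k_2)$ gain against two $\ell^2$ factors (a Cauchy–Schwarz/convolution estimate, using $\sum \langle k\rangle^{2s}/(k_1^2k_2^2)<\infty$ type bounds for $s<1$), and an interior term where $\partial_\tau a$ is replaced, via the equation, by the full nonlinearity plus $f$; (iv) estimate the resulting cubic term (which gains $1/(kk_1k_2)$ and only costs one extra $\ell^2$ factor via the trilinear estimate already implicit in the KdV local theory) and the forcing-generated term, everywhere inserting the uniform-in-time bound $\|u(t)\|\lesssim e^{-\gamma t}\|u_0\|+\|f\|/\gamma$ from \eqref{L2bound} to make the time integrals converge. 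The forcing term in Duhamel, $e^{-\gamma t}\int_0^t e^{\gamma\tau}e^{i(k^3)(t-\tau)}\hat f(k)d\tau$, is even easier: it is smooth in $x$ with norm controlled by $\|f\|/\gamma$, so it contributes directly to the $H^s$ (indeed $L^2$-regularity is not even lost here, but we only claim $H^s$).

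For the \emph{continuity} statement $v\in C^0_tH^s_x$, I would observe that each piece in the decomposition is given by an absolutely (and uniformly on compact time intervals) convergent series in $H^s$ whose terms are continuous in $t$ — the boundary terms are continuous because $a_k(t)\in C^0_t\ell^2$ from the $L^2$ local theory recalled in the introduction, and the interior integrals are continuous as integrals of $H^s$-valued continuous functions — so a dominated-convergence / uniform-limit argument gives continuity.

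The main obstacle I expect is the \textbf{resonant contribution}: unlike the defocusing NLS or unforced KdV cases, here there is no conserved quantity forcing the resonant sum to telescope cleanly, and one must check that the diagonal interactions, weighted by $e^{-2\gamma t}$ and integrated against the slowly-varying-in-time $a_k$'s, still land in $H^s$ rather than merely $L^2$. This should work because mean-zero kills the worst resonances and the smoothing comes from summing $\langle k\rangle^{2s-2}$; but getting a \emph{uniform-in-time} constant depending only on $s,\gamma,\|u_0\|,\|f\|$ — as the theorem demands — requires carefully tracking the $e^{-\gamma t}$ gains against the growth of the Duhamel time integral, and this bookkeeping (rather than any single hard estimate) is the technical heart of the proof.
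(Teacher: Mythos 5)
Your overall strategy --- differentiation by parts exploiting $k^3-k_1^3-k_2^3=3kk_1k_2$, followed by an estimate of the resulting cubic term --- is the route the paper takes (the paper first subtracts $v=\partial_x^{-3}f$ to absorb the forcing rather than carrying it through Duhamel, but that is cosmetic). Two points you leave at the level of ``should work,'' however, are genuine gaps. The first concerns the resonances. Mean-zero makes the \emph{quadratic} resonant set empty ($k$, $k_1$, $k_2$ are all nonzero, so $kk_1k_2\neq 0$ always and there is no ``diagonal term'' to handle at that stage); the real issue is the \emph{cubic} resonant set $(k_1+k_2)(k_1+k_3)(k_2+k_3)=0$ that appears after substituting the equation back in. Besides the harmless term $\tfrac{i}{6k}|u_k|^2u_k$, this produces $u_k\sum_{|j|\neq|k|}|u_j|^2/j$, which is a scalar multiple of $u_k$ and hence no smoother than $u$ itself --- if it survived, the method would not yield $H^s$. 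It vanishes not because of mean-zero but because $u$ is real, so $|u_{-j}|=|u_j|$ and the summand $|u_j|^2/j$ is odd in $j$; this cancellation must be exhibited explicitly (it is the $\rho_k$ computation in Proposition 3.1 of the paper). Relatedly, the nonresonant cubic term carries only the weight $1/k_1$ together with the nonresonance factor $(k_1+k_2)(k_1+k_3)(k_2+k_3)$, and bounding it in $X^{s,-1/2+\eps}$ by $\|u\|_{X^{0,1/2}}^3$ requires Bourgain's $L^6$ Strichartz estimate via a duality argument, not just a Cauchy--Schwarz/convolution bound.

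The second gap is the uniform-in-time constant. Inserting the a priori bound $\|u(t)\|\lesssim e^{-\gamma t}\|u_0\|+\|f\|/\gamma$ into a single Duhamel integral over $[0,t]$ does not close the argument: the trilinear $X^{s,b}$ estimate is only available on intervals of length $\delta\approx\langle\|u_0\|+\|f\|/\gamma\rangle^{-\alpha}$ on which the local theory gives $\|u\|_{X^{0,1/2}_\delta}\lesssim\|u(0)\|$, and the remaining time integrals grow linearly in $t$ if taken over the whole interval. The paper's mechanism is to prove the increment bound $\|u(j\delta)-e^{\delta(L-\gamma)}u((j-1)\delta)\|_{H^s}\lesssim 1$ on each short interval and then telescope, using that $e^{(J-j)\delta(L-\gamma)}$ contracts $H^s$ by the factor $e^{-(J-j)\delta\gamma}$, so the total is a geometric series bounded by $(1-e^{-\delta\gamma})^{-1}$. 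This is exactly where the damping enters (in the undamped case of \cite{ET} the corresponding bound grows in time), and it is the step your sketch correctly identifies as the technical heart but does not supply.
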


This theorem in the case when $\gamma=f=0$ was obtained in \cite{ET}. Theorem~\ref{theo:main1} and \eqref{L2bound}  imply the following
\begin{corollary}\label{cor:thm1}
Fix $s \in(0,1)$. Consider the  forced and weakly damped  KdV equation \eqref{eq:kdv}
on $\mathbb{T}\times \mathbb R$ with $u(x,0)=u_0(x)\in \dot L^2$. Then there exists $T=T(\gamma, \|u_0\| , \|f\|)$ such that for $t\geq T$,
\[
\|u(t)- e^{-\gamma (t-T)}e^{(t-T)L}u(T)\|_{H^{s}} \leq  C(s ,\gamma, \|f\| ).
\]
\end{corollary}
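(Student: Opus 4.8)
The plan is to combine the \emph{autonomy} of the flow \eqref{eq:kdv} with the a priori $L^2$ bound \eqref{L2bound}, so that the corollary becomes a direct application of Theorem~\ref{theo:main1} to a restarted solution whose initial datum has $L^2$ norm controlled solely in terms of $\gamma$ and $\|f\|$. The only reason Theorem~\ref{theo:main1} does not already give the corollary is that its constant is allowed to depend on $\|u_0\|$; absorbing that dependence is exactly what waiting until time $T$ accomplishes.

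First I would fix $T=T(\gamma,\|u_0\|,\|f\|)$ as in the discussion following \eqref{L2bound}, so that $\|u(T)\|<2\|f\|/\gamma$. Since \eqref{eq:kdv} has time-independent coefficients, the function $v(x,\tau):=u(x,T+\tau)$ solves the same forced and weakly damped KdV equation, now with initial datum $v(\cdot,0)=u(\cdot,T)$. As observed in the introduction, the solution is mean-zero at every time, so $v(\cdot,0)\in\dot L^2$ and Theorem~\ref{theo:main1} applies to $v$, yielding $v(\tau)-e^{-\gamma\tau}e^{\tau L}v(\cdot,0)\in C^0_\tau H^s_x$ together with
\[
\|v(\tau)-e^{-\gamma\tau}e^{\tau L}v(\cdot,0)\|_{H^s}\leq C\big(s,\gamma,\|u(T)\|,\|f\|\big).
\]

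Finally, since the constant furnished by Theorem~\ref{theo:main1} may be taken nondecreasing in its third argument — replace it, if necessary, by its supremum over all smaller values of $\|u_0\|$ — and since $\|u(T)\|<2\|f\|/\gamma$ is a quantity depending only on $\gamma$ and $\|f\|$, we bound $C(s,\gamma,\|u(T)\|,\|f\|)\leq C(s,\gamma,2\|f\|/\gamma,\|f\|)=:C(s,\gamma,\|f\|)$. Undoing the translation, i.e.\ writing $t=T+\tau\geq T$ and noting $v(\tau)=u(t)$, $v(\cdot,0)=u(T)$, gives precisely
\[
\|u(t)-e^{-\gamma(t-T)}e^{(t-T)L}u(T)\|_{H^s}\leq C(s,\gamma,\|f\|),\qquad t\geq T,
\]
as claimed. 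There is essentially no analytic obstacle: the only point that needs a word is the monotonicity in $\|u_0\|$ of the constant in Theorem~\ref{theo:main1}, which is either evident from how that constant is produced or, failing that, can simply be arranged by the supremum trick above.
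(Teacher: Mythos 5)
Your proposal is correct and is exactly the argument the paper intends: the authors state the corollary as an immediate consequence of Theorem~\ref{theo:main1} and \eqref{L2bound}, namely restarting the (autonomous, mean-zero-preserving) flow at the time $T$ after which $\|u(T)\|<2\|f\|/\gamma$, so that the constant from Theorem~\ref{theo:main1} depends only on $s$, $\gamma$, and $\|f\|$. Your remark on the monotonicity (or supremum replacement) of the constant in $\|u_0\|$ is a reasonable way to make explicit a point the paper leaves implicit.
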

This implies that all $\dot L^2$ solutions are attracted by a ball in $H^s$ centered at zero of radius depending only on $s ,\gamma, \|f\|$.  An upper bound for this radious can be calculated explicitly by keeping track of the constants in our proof.
Moreover, the description of the dynamics is explicit in the sense that after time $T$ the evolution can be written as a sum of the linear evolution which decays to zero exponentially and a nonlinear evolution contained by the attracting ball. We now try to put this corollary into context in the general theory of global attractors.

The problem of global attractors for nonlinear PDEs has generated great interest among engineers, physicists and mathematicians in the last several decades. The theory is concerned with the description of the nonlinear dynamics for a given problem as $t \to \infty$. In particular assuming that one has a well-posed problem for all times we can define the semigroup operator $U(t):u_{0}\in H\to u(t)\in H$ where $H$ is the phase space. We want to describe the long time asymptotics of the solution by an invariant set $X\subset H$ (a global attractor) to which the orbit converges as $t \to \infty$:
$$U(t)X=X, \ \ t\in \Bbb R_+,\,\,\,\,\,\,\,\,\,\,d(u(t), X)\to 0.$$

For dissipative systems there are many results (see, e.g., \cite{temam}) establishing the existence of  a  compact   set that satisfies the above properties. Dissipativity is characterized by the existence of a bounded absorbing set into which all solutions enter eventually. In the case of the equation \eqref{eq:kdv}, the damping parameter $\gamma>0$ makes the system dissipative, c.f. \eqref{L2bound}. Notice that this is in contrast with conservative Hamiltonian systems where the orbits may fill the whole space or regions of it. In some cases  the global attractor is   a ``thin" set, for example,  it may be a finite dimensional set although the phase space is  infinite dimensional. The candidate for the attractor set  is the omega limit set of an absorbing set, $B$, defined by $$\omega(B)=\bigcap_{s \geq 0}\overline{\bigcup_{t\geq s}U(t)B}$$
where the closure is taken on $H$. To describe the history of the problem more accurately we need some definitions.
\begin{defin}
An attractor is a set $\mathcal{A} \subset H$ that is invariant under the flow and possesses an open neighborhood $\mathcal{U}$ such that, for every $u_{0}\in \mathcal{U}$, $d(U(t)u_{0},\mathcal{A})\to 0$ as $t\to \infty$.
\end{defin}
The distance is understood to be the distance of a point to the set $d(x,Y)=\inf_{y\in Y}d(x,y)$. We say that $\mathcal{A}$ attracts the points of $\mathcal{U}$ and we call the largest open such set $\mathcal{U}$ the basin of attraction.
\begin{defin}
We say that $\mathcal{A} \subset H$ is a global attractor for the semigroup $\{U(t)\}_{t \geq 0}$ if $\mathcal{A}$ is a compact attractor whose basin of attraction is   $H$.
\end{defin}

\vspace{3mm}
\noindent
{\bf Remark.}  We should note that the attracting ball in $H^s$ that our theorem provideas is not a global attractor since we don't know whether it is an invariant set.

\vspace{3mm}

To state a general theorem for the existence of a global attractor we need one more definition:
\begin{defin}
Let $\mathcal{B}_0$ be a bounded set of $H$ and $\mathcal{U}$ an open set containing $\mathcal{B}_0$. We say that $\mathcal{B}_0$ is absorbing in $\mathcal{U}$ if   for any bounded $\mathcal{B} \subset \mathcal{U}$ there exists $T=T(\mathcal{B})$ such that for all $t \geq T$, $U(t)\mathcal{B} \subset \mathcal{B}_0$. We also say that $\mathcal{B}_0$ absorbs the bounded subsets of $\mathcal{U}$.
\end{defin}
It is not hard to see that the existence of a global attractor $\mathcal{A}$ for a semigroup $U(t)$ implies that of an absorbing set. For the converse  we cite the following theorem from \cite{temam} which gives a  general criterion for the existence of a global attractor.

\vspace{5mm}

\noindent
{\bf Theorem A.} {\it
We assume that $H$ is a metric space and that the operator $U(t)$ is a continuous semigroup from $H$ to itself for all $t \geq 0$. We also assume that there exists an open set $\mathcal{U}$ and a bounded set $\mathcal{B}_0$ of $\mathcal{U}$ such that $\mathcal{B}_0$ is absorbing in $\mathcal{U}$. If the semigroup $\{U(t)\}_{t\geq 0}$ is asymptotically compact, i.e. for every bounded sequence $x_k$ in $H$ and every sequence $t_k \to \infty$, $\{U(t_k)x_k\}_{k}$ is relatively compact in $H$, then  $\omega( \mathcal{B}_0)$ is a compact attractor with basin  $\mathcal{U}$ and it is maximal for the inclusion relation.}

\vspace{5mm}

For the forced and weakly damped KdV, Ghidaglia in \cite{jde1988} established the existence of a global attractor in $H^2$ for the weak topology. Moreover the attractor has finite $H^1$ dimension. The result can be then upgraded to a result in the strong topology by an argument of Ball, see \cite{ballDCDS2004} and \cite{ghiaJDE1994}. There are usually two steps in proving such a result. The existence of absorbing sets is derived by establishing energy inequalities coming from the equation. To prove the asymptotic compactness of the semigroup one relies again on energy inequalities and the fact that the semigoup is a continuous mapping for the weak topology of $H^2$ (notice that the continuity of $U(t)$ in $H^2$ does not imply the weak continuity i.e. if $u_{0}^{n} \stackrel{w}{\to} u_{0}$ in $H^2$ then $u^{n}( t) \stackrel{w}{\to} u(t)$ in $H^2$). To this end one uses the fact  that   the mapping $U(t)$ is continuous with respect to the $H^{1}$ norm on bounded subsets of $H^!
 2$. For the details see \cite{jde1988}. Ball's argument uses $L^2$ energy equation to upgrade the asymptotic compactness from the weak to the  strong topology.

In \cite{goubet}, Goubet proved the existence of a global attractor on $\dot L^2$ and  concerning its regularity he proved that the global attractor is a compact subset of $H^3$. This was achieved by splitting the solution into two parts, high and low frequencies. The low frequencies are regular and thus in $H^3$, while the high frequencies decays to zero in $L^2$ as time goes to infinity. The existence of a global attractor below $L^2$ was established by Tsugawa in \cite{tsugawa}. The difficulty there lies in the fact that there is no conservation low for the KdV equation below $L^2$. He bypasses this problem by using the method of almost conserved quantities of Colliander {\em et al}  \cite{ckstt}.
In addition he proves that the global attractor below $L^2$ is same as the one obtained by Goubet. One can lower the Sobolev index further, see \cite{yang}. Also see \cite{CabRos} for numerical results on the properties of the attractor set for the large and small values of the damping parameter.

We will prove in Section~\ref{sec:attract} below that the hypothesis of Theorem A can be checked using only Theorem~\ref{theo:main1} and  Corollary~\ref{cor:thm1}. In particular, one does not need to utilize weak topology and Ball's argument. Theorem~\ref{theo:main1} implies asymptotic compactness in the strong topology directly, see Section~\ref{sec:attract}.  Therefore, we obtain the following:

\begin{theorem}\label{thm:attractor}
Consider the  forced and weakly damped KdV equation \eqref{eq:kdv}
on $\mathbb{T}\times \mathbb R$ with  $u(x,0)=u_0(x)\in \dot L^2$. Then the equation possesses a global attractor in $\dot L^2$. Moreover, for any $s\in (0,1)$,  the global attractor is a compact subset of $H^s$, and it is bounded in $H^s$ by a constant depending  only on $s, \gamma$, and $\|f\|$.
\end{theorem}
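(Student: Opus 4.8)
The plan is to check the hypotheses of Theorem~A with phase space and basin $H=\mathcal U=\dot L^2$ and with the absorbing set
\[
\mathcal B_0:=\{g\in\dot L^2:\|g\|\le 2\|f\|/\gamma\};
\]
the only nontrivial input is Corollary~\ref{cor:thm1} (equivalently Theorem~\ref{theo:main1}), used precisely to get asymptotic compactness. By the local theory of Section~\ref{sec:wellposed} together with the a priori bound \eqref{L2bound}, $U(t)$ is a globally defined continuous semigroup on $\dot L^2$. That $\mathcal B_0$ absorbs every bounded set is immediate from \eqref{L2bound}: if $\|u_0\|\le R$, then $\|U(t)u_0\|\le e^{-\gamma t}R+\|f\|/\gamma<2\|f\|/\gamma$ for $t\ge T_R:=T(\gamma,R,\|f\|)$, the threshold from \eqref{L2bound}, which is nondecreasing in $R$; moreover $\|U(T_R)u_0\|\le 2\|f\|/\gamma$.

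For asymptotic compactness, let $(x_k)\subset\dot L^2$ with $\|x_k\|\le R$ and $t_k\to\infty$. Put $T_k:=T(\gamma,\|x_k\|,\|f\|)\le T_R$. For $k$ so large that $t_k\ge T_R$, Corollary~\ref{cor:thm1} yields
\[
U(t_k)x_k=e^{-\gamma(t_k-T_k)}e^{(t_k-T_k)L}\,U(T_k)x_k+v_k,\qquad \|v_k\|_{H^s}\le C(s,\gamma,\|f\|).
\]
Since $\|U(T_k)x_k\|\le 2\|f\|/\gamma$ and $e^{tL}$ acts isometrically on $\dot L^2$, the first term has $L^2$ norm at most $e^{-\gamma(t_k-T_k)}\,2\|f\|/\gamma\to 0$ as $k\to\infty$ (the $T_k$ are bounded while $t_k\to\infty$). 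The sequence $(v_k)$ is bounded in $H^s(\mathbb T)$ with $s>0$, hence relatively compact in $L^2(\mathbb T)$ by the compact Sobolev embedding. Adding a sequence converging to $0$ in $L^2$, we conclude that $(U(t_k)x_k)$ is relatively compact in $\dot L^2$; this is asymptotic compactness. Theorem~A then gives a global attractor $\mathcal A=\omega(\mathcal B_0)\subset\dot L^2$.

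To control $\mathcal A$ in $H^s$, fix $s\in(0,1)$ and $a\in\mathcal A$. By invariance of the global attractor, $U(t)\mathcal A=\mathcal A$, so for every $t\ge0$ there is $a_t\in\mathcal A\subset\mathcal B_0$ with $U(t)a_t=a$ and $\|a_t\|\le 2\|f\|/\gamma$. Apply Corollary~\ref{cor:thm1} to the solution with datum $a_t$; its absorbing time $\tau_t:=T(\gamma,\|a_t\|,\|f\|)$ is bounded uniformly in $t$, and for all large $t$ we obtain
\[
a=e^{-\gamma(t-\tau_t)}e^{(t-\tau_t)L}\,U(\tau_t)a_t+w_t,\qquad \|w_t\|_{H^s}\le C(s,\gamma,\|f\|).
\]
As before the first term tends to $0$ in $L^2$ as $t\to\infty$, so $w_t\to a$ in $L^2$. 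Passing to a sequence $t_j\to\infty$ along which $w_{t_j}$ converges weakly in $H^s$, the weak limit must equal $a$, and weak lower semicontinuity of the norm gives $\|a\|_{H^s}\le C(s,\gamma,\|f\|)$. Thus $\mathcal A$ is bounded in $H^\sigma$ by $C(\sigma,\gamma,\|f\|)$ for every $\sigma\in(0,1)$. Given $s\in(0,1)$, pick $\sigma\in(s,1)$: boundedness of $\mathcal A$ in $H^\sigma$ makes it precompact in $H^s$ by Rellich's theorem, and $\mathcal A$ is closed in $H^s$ because an $H^s$-convergent sequence in $\mathcal A$ also converges in $L^2$, where $\mathcal A$ is closed; hence $\mathcal A$ is compact in $H^s$.

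The analytic substance is already carried by Theorem~\ref{theo:main1}, so the remaining difficulty is bookkeeping rather than estimation. The two points that need care are: keeping the absorbing times $T_k$ and $\tau_t$ bounded along the relevant sequences, so the constant in the decompositions above does not degenerate; and transferring the $H^s$-bound valid for the forward flow back to the attractor itself, which forces the use of invariance together with weak lower semicontinuity, and then of the full parameter range $\sigma\in(0,1)$ in Theorem~\ref{theo:main1} to upgrade boundedness to $H^s$-compactness.
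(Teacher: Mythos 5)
Your proposal is correct, and the first half (absorbing set plus asymptotic compactness via the decomposition into an exponentially decaying linear part and an $H^s$-bounded nonlinear part, followed by Rellich) is essentially the paper's argument; if anything you are slightly more careful, since you track the thresholds $T_k$ uniformly over a bounded sequence of data, whereas the paper states precompactness of a single orbit $\{U(t)u_0\}_{t>0}$ and lets the uniformity in $\|u_0\|$ remain implicit. Where you genuinely diverge is in proving $\mathcal A\subset B_s$. The paper works directly with the representation $\mathcal A=\bigcap_{\tau}\overline{\bigcup_{t\ge\tau}U(t)\mathcal B_0}$: Corollary~\ref{cor:thm1} places each tail set inside a $\delta_\tau$-neighborhood of the closed $H^s$-ball $B_s$ in the $L^2$ metric, with $\delta_\tau\to0$, and then uses that $B_s$ is compact in $L^2$ to conclude $\bigcap_\tau N_\tau=B_s$. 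You instead invoke the invariance $U(t)\mathcal A=\mathcal A$ to pull each $a\in\mathcal A$ back to a preimage $a_t$, apply Corollary~\ref{cor:thm1} forward, and recover $\|a\|_{H^s}\le C$ from weak lower semicontinuity of the $H^s$ norm along a weakly convergent subsequence. Both routes rest on the same underlying fact (closed $H^s$-balls are weakly closed, equivalently $L^2$-compact); yours uses the full strength of Theorem~A (invariance of $\omega(\mathcal B_0)$) but avoids manipulating the nested closures, while the paper's needs only the definition of the $\omega$-limit set and so would apply verbatim to any candidate attractor defined that way. Your closing step --- using boundedness in $H^\sigma$ for $\sigma\in(s,1)$ to get precompactness in $H^s$, plus closedness of $\mathcal A$ in $H^s$ inherited from closedness in $L^2$ --- fills in a point the paper leaves terse.
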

Note that the new information provided by this theorem is the dependence of the bound of the attractor set in $H^s$ on  $s, \gamma$, and $\|f\|$. Moreover our proof is different and simpler than the previous known proofs on the existence of the attractor.

\subsection{Outline of the proof of Theorem~\ref{theo:main1}}

We say a few words about the method of the proof. Following the argument in \cite{bit}, \cite{ET} we write the equation on the Fourier side and use differentiation by parts to take advantage of the large denominators that appear due to the dispersion. In this particular form the derivative in the nonlinearity is eliminated. The penalty one pays after such a reduction is an increase in the order of   the nonlinearity (in KdV from quadratic to cubic)  and the appearance of resonant terms. Due to the absence of the zero Fourier modes  the bilinear nonlinearity has no resonant terms.   To estimate the new tri-linear term we now   decompose the nonlinearity into resonant and nonresonant terms.  It should be noted that   in the resonant terms the waves interact with no oscillation and hence they are always ``the enemy''.  But it turns out that the nonsmooth resonant terms of the KdV cancel out and the gain of the derivative is more than enough to compensate for the remaining nonli!
 near terms.  For the nonresonant terms, we apply the restricted norm method of Bourgain to the reduced nonlinearity to prove the theorem.

\subsection{Notation}
To avoid the use of multiple constants, we  write $A \lesssim B$ to denote that there is an absolute  constant $C$ such that $A\leq CB$.  We also write $A\approx B$ to denote both $A\lesssim B$ and $B \lesssim A$. We define $\langle \cdot\rangle =1+|\cdot|$.

We define the Fourier sequence of a $2\pi$-periodic $L^2$ function $u$ as
$$u_k=\frac1{2\pi}\int_0^{2\pi} u(x) e^{-ikx} dx, \,\,\,k\in \mathbb Z.$$
With this normalization we have
$$u(x)=\sum_ke^{ikx}u_k,\,\,\text{ and } (uv)_k=u_k*v_k=\sum_{m+n=k} u_nv_m.$$

Note that for a   $\dot L^2$ function $u$, $\|u\|_{H^{s}}\approx \|\widehat u(k) |k|^{s}\|_{\ell^2}$.
For a sequence $u_k$, with $u_0=0$, we will use $\|u\|_{H^{s}}$ notation to denote $\|u_k |k|^s\|_{\ell^2}$.

\vspace{.4cm}
\noindent
{\bf Acknowledgments.}
We thank J.~Colliander for pointing out the connection between our result in \cite{ET} and the problem of global attractors.

\section{Well-posedness theory of forced and weakly damped KdV}\label{sec:wellposed}
We define the $X^{s,b}$ spaces for $2\pi$-periodic KdV via the norm
$$
\|u\|_{X^{s,b}}=\|\langle k\rangle^s \langle \tau-k^3\rangle^b \widehat u(k,\tau)\|_{L^2(dkd\tau)}.
$$
We also define the restricted norm
$$
\|u\|_{X^{s,b}_\delta}=\inf_{ \tilde u=u \text{ on } [-\delta,\delta]}\|\tilde u\|_{X^{s,b}}.
$$
The local well-posedness theory for periodic KdV was established in the space $X^{s,1/2}$.
Unfortunately, this space fails to control the $L^\infty_t H^s_x$ norm of the solution. To remedy this problem and ensure the continuity of KdV flow, the $Y^s$ and $Z^s$ spaces are defined in \cite{GTV} and \cite{ckstt}, based on the ideas of Bourgain  \cite{Bou2} via the norms
$$
\|u\|_{Y^s}=\|u\|_{X^{s,1/2}}+ \| \langle k\rangle^s\widehat u(k,\tau) \|_{L^2(dk)L^1(d\tau)},
$$
$$
\|u\|_{Z^s}=\|u\|_{X^{s,-1/2}}+\Big\|\frac{\langle k\rangle^s\widehat u(k,\tau)}{\langle \tau-k^3\rangle}\Big\|_{L^2(dk)L^1(d\tau)}.
$$
One defines $Y^s_\delta$, $Z^s_\delta$ accordingly.
Note that if $u\in Y^s$ then $u\in L^\infty_t H^s_x$.

\begin{theorem} \label{thm:I1}
The initial value problem \eqref{eq:kdv} is locally and globally well-posed in $L^2$. In particular, $\exists \delta=\delta( \|u_0\|, \gamma, \|f\|)$ and a unique solution
$u\in C([-\delta,\delta];L^2_x(\mathbb T))\cap Y^0_\delta$
with
$$\|u\|_{X^{0,1/2}_\delta}\leq \|u\|_{Y^0_\delta}\leq C \|u_0\|.$$
\end{theorem}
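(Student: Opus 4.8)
The plan is to run Bourgain's contraction-mapping argument for periodic KdV in the restricted spaces $X^{0,1/2}_\delta\subset Y^0_\delta$, treating the damping $\gamma u$ and the forcing $f$ as lower-order perturbations. Writing $L=-\partial_x^3$, Duhamel's formula recasts \eqref{eq:kdv} as the fixed-point equation
\be
u=\Phi(u),\qquad \Phi(u)(t):=\eta(t/\delta)\Big[\,e^{-\gamma t}e^{tL}u_0+\int_0^t e^{-\gamma(t-t')}e^{(t-t')L}\big(f-\tfrac12\partial_x(u^2)\big)\,dt'\,\Big],
\ee
where $\eta$ is a fixed smooth bump equal to $1$ on $[-1,1]$ and supported in $[-2,2]$, and $0<\delta\le1$ is to be chosen; since $\eta(t/\delta)\equiv1$ on $[-\delta,\delta]$, a fixed point of $\Phi$ solves \eqref{eq:kdv} there. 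I would seek $u$ in the ball of radius $R$ in $Y^0_\delta$, with $R$ comparable to $\|u_0\|+\|f\|$ and $\delta$ small depending on $\|u_0\|$, $\gamma$, $\|f\|$.

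First I would quote the two linear estimates driving the $\gamma=f=0$ theory of \cite{GTV,ckstt}: the group estimate $\|\eta(t)e^{tL}g\|_{Y^0}\lesssim\|g\|$, the Duhamel estimate $\|\eta(t)\int_0^t e^{(t-t')L}F\,dt'\|_{Y^0}\lesssim\|F\|_{Z^0}$, and their time-localized ($t\mapsto t/\delta$) versions. The only new ingredient is the damping: writing $e^{-\gamma(t-t')}=e^{-\gamma t}e^{\gamma t'}$ (equivalently, via the substitution $u=e^{-\gamma t}v$, which turns \eqref{eq:kdv} into periodic KdV with bounded smooth time-dependent coefficients on $[-\delta,\delta]$), one checks that on a bounded time interval the extra factors $e^{\pm\gamma t}$ act as multipliers whose effect on the relevant $X^{s,b}$- and $L^2_kL^1_\tau$-norms is bounded by a constant $C(\gamma)$, uniformly in $\delta\le1$; hence the damped group and Duhamel operators obey the above estimates up to the factor $C(\gamma)$. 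The forcing term is harmless: the cut-off expression $\eta(t/\delta)\int_0^t e^{-\gamma(t-t')}e^{(t-t')L}f\,dt'$ has $k$-th Fourier coefficient $\hat f(k)\,\eta(t/\delta)\,\frac{e^{(ik^3-\gamma)t}-1}{ik^3-\gamma}$, which is smooth in $t$, carries the KdV phase $e^{ik^3t}$, and is $O(|\hat f(k)|\langle k\rangle^{-3})$, so this term lies in $Y^0_\delta$ with norm $\le C(\gamma)\|f\|$ (alternatively one checks $\|\eta(t/\delta)f\|_{Z^0}\lesssim\|f\|$ and applies the Duhamel estimate). Thus $\|\Phi(u)\|_{Y^0_\delta}\le C(\gamma)\big(\|u_0\|+\|f\|+\|\partial_x(u^2)\|_{Z^0_\delta}\big)$.

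The essential input is the time-localized bilinear estimate
\be
\|\partial_x(uv)\|_{Z^0_\delta}\lesssim\delta^{\theta}\,\|u\|_{Y^0_\delta}\,\|v\|_{Y^0_\delta}
\ee
for some $\theta>0$, valid for mean-zero $u,v$; this is precisely the estimate underpinning Bourgain's $L^2$ well-posedness of periodic KdV (\cite{Bou2,GTV,ckstt}), the mean-zero hypothesis being used to discard the resonant interactions $k_1k_2(k_1+k_2)=0$ in the denominator produced by the dispersion relation. Granting it, $\Phi$ maps the ball of radius $R=2C(\gamma)(\|u_0\|+\|f\|)$ in $Y^0_\delta$ into itself and is a contraction on it as soon as $C(\gamma)\delta^{\theta}R\le\tfrac12$, i.e. for $\delta=\delta(\|u_0\|,\gamma,\|f\|)$ small; the unique fixed point is the desired solution, it satisfies $\|u\|_{X^{0,1/2}_\delta}\le\|u\|_{Y^0_\delta}\le R$, and it lies in $C([-\delta,\delta];L^2_x)$ because $Y^0\hookrightarrow C_tL^2_x$. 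Uniqueness in all of $Y^0_\delta$ and continuous dependence on $u_0$ follow from the same bilinear estimate by the standard difference argument.

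For global well-posedness, the a priori bound \eqref{L2bound} — established formally in the introduction and extended to $L^2$ solutions by approximating $u_0$ with smooth data and using continuous dependence — gives $\|u(t)\|\le\|u_0\|+\|f\|/\gamma=:M$ for all $t\ge0$. Since the local existence time $\delta$ depends on the initial data only through an upper bound for its $L^2$ norm (together with $\gamma$ and $\|f\|$), we may fix $\delta=\delta(M,\gamma,\|f\|)$ and iterate the local result on the intervals $[n\delta,(n+1)\delta]$, $n\ge0$ (and symmetrically for $t\le0$), obtaining a solution for all time. The main obstacle is the bilinear estimate with the positive power of $\delta$; everything else is a routine transcription of the unforced, undamped theory, the damping and the time-independent forcing entering only through elementary bounded-multiplier bounds for $e^{\pm\gamma t}$ and the obvious smoothing effect of $\int_0^t e^{(t-t')L}f\,dt'$.
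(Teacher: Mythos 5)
Your proof is correct and follows the same overall strategy as the paper: Bourgain's contraction argument in $Y^0_\delta$ built on the group estimate, the Duhamel estimate $\|\eta(t)\int_0^t e^{L(t-s)}F\,ds\|_{Y^0_\delta}\lesssim\|F\|_{Z^0_\delta}$, and the time-localized bilinear estimate for $\partial_x(u^2)$ from \cite{Bou2}, with global existence then following from the a priori bound \eqref{L2bound}. The one genuine difference is organizational: the paper keeps the undamped Duhamel operator and absorbs the whole of $F=\frac12\partial_x(u^2)+\gamma u-f$ into the inhomogeneity, so the linear terms $\gamma u$ and $f$ must be shown to be perturbative in $Z^0_\delta$; this is exactly why the paper proves the one estimate of Lemma~\ref{lem:freeys} not already in \cite{Bou2}, namely $\|u\|_{Z^0_\delta}\lesssim\delta^{1-}\|u\|_{Y^0_\delta}$, which supplies the small factor needed to close the contraction for the term $\gamma u$. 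You instead build the damping into the semigroup $e^{-\gamma(t-t')}e^{(t-t')L}$ and replace that lemma by a bounded-multiplier argument for the cut-off factors $e^{\pm\gamma t}$ (uniform in $\delta\le1$ by Young's inequality in $\tau$), together with an explicit computation for the forcing term. Both routes are sound and of comparable length; the paper's has the mild advantage that the auxiliary estimate it proves is stated purely in terms of the spaces already in play and requires no discussion of time-dependent multipliers, while yours has the advantage that no smallness in $\delta$ is needed for the linear damping term at all. One cosmetic remark: your bound $\|u\|_{Y^0_\delta}\le C(\|u_0\|+\|f\|)$ is the honest conclusion of the fixed-point argument (the paper's stated bound $C\|u_0\|$ cannot hold literally when $u_0=0$ and $f\neq0$, so its constant must implicitly absorb $\|f\|$ and $\gamma$, as the paper's later use of the local theory confirms).
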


The proof of this theorem follows the arguments in \cite{Bou2} which we briefly sketch.
The following lemma is a collection of statements originated in \cite{Bou2}:
\begin{lemma}\label{lem:freeys} Let $\eta$ be a smooth function supported on $[-2,2]$  which is identically $1$ on $[-1,1]$. For any $\delta<1$, the following a priori estimates hold
\begin{align*}
&\|\eta(t)e^{Lt} u_0\|_{Y^0_\delta}\lesssim\|u_0\|,\\
&\Big\|\eta(t)\int_0^t e^{L(t-s)} F(s) ds\Big\|_{Y^0_\delta}\lesssim \|F\|_{Z^0_\delta},\\
&\|\partial_x (u^2)\|_{Z^0_\delta}\lesssim \|u\|_{X^{0,1/2}_\delta}\|u\|_{X^{0,1/3}_\delta}.
\end{align*}
For any $-1/2< b^\prime<b<1/2$,
$$\|u\|_{X^{0,b^\prime}_\delta}\lesssim \delta^{b-b^\prime}\|u\|_{X^{0,b}_\delta}.$$
Finally,
$$\|u\|_{Z^{0}_\delta}\lesssim \delta^{1-}\|u\|_{Y^{0}_\delta}.$$
\end{lemma}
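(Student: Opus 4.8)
The plan is to establish the five assertions one at a time; each is by now classical and goes back to \cite{Bou2} (see also \cite{GTV} and \cite{ckstt}), so I will only indicate the mechanism.

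First I would dispatch the two linear estimates by a direct Fourier-side computation. Since $L=-\partial_x^3$, the spacetime Fourier transform of $\eta(t)e^{Lt}u_0$ is $\widehat{u_0}(k)\,\widehat\eta(\tau-k^3)$, whence
$\|\eta(t)e^{Lt}u_0\|_{X^{0,1/2}}^2=\sum_k|\widehat{u_0}(k)|^2\int\langle\tau-k^3\rangle\,|\widehat\eta(\tau-k^3)|^2\,d\tau=\|u_0\|^2\int\langle\sigma\rangle\,|\widehat\eta(\sigma)|^2\,d\sigma$,
which is finite because $\eta$ is Schwartz, and the $L^2(dk)L^1(d\tau)$ part of the $Y^0$ norm equals $\|u_0\|\,\|\widehat\eta\|_{L^1}$. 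Since this bounds one particular extension of $\eta(t)e^{Lt}u_0$ to $t\in\mathbb R$, the restricted norm $\|\cdot\|_{Y^0_\delta}$ is controlled. For the Duhamel term I would use the standard identity that writes $\eta(t)\int_0^te^{L(t-s)}F(s)\,ds$ as $\eta(t)e^{tL}$ applied to a function of the same type as $u_0$ plus a remainder whose Fourier transform carries the weight $\langle\tau-k^3\rangle^{-1}$; applying the free estimate to the first summand and estimating the remainder directly yields the bound by the two pieces of $\|F\|_{Z^0}$, and the restricted-norm version again follows by taking any extension of $F$. The hypothesis $\delta<1$ only serves to make the cutoffs harmless.

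The bilinear estimate is the heart of the matter. Writing $w=\partial_x(u^2)$, the hypothesis $u_0\in\dot L^2$ forces every frequency appearing in the convolution to be a \emph{nonzero} integer, while the derivative forces $k\neq0$ on the support of $\widehat w$; with $k=n+m$ the algebraic identity $(n+m)^3-n^3-m^3=3nm(n+m)$ gives
$\max\bigl(\langle\tau-k^3\rangle,\,\langle\tau_1-n^3\rangle,\,\langle(\tau-\tau_1)-m^3\rangle\bigr)\gtrsim|3nmk|\gtrsim|k|$,
which is precisely the derivative lost in $w$. I would then split into three cases according to which modulation is largest; in each case, after distributing the weights, the claim reduces to an $\ell^2$ bound for a discrete convolution, closed by Cauchy--Schwarz together with the usual counting bound on the number of $n\in\mathbb Z$ for which the quadratic $n^3+(k-n)^3$ lies in an interval of length $N$. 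The additional $L^2(dk)L^1(d\tau)$ component of the $Z^0$ norm is absorbed by one further Cauchy--Schwarz in $\tau$, paying the integrable factor $\langle\tau-k^3\rangle^{-1}$. Only $X^{0,1/3}$ is needed on the second factor because $3nmk$ supplies far more modulation than the single derivative requires.

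Finally, the two time-localization bounds are one-dimensional. For $\|u\|_{X^{0,b'}_\delta}\lesssim\delta^{b-b'}\|u\|_{X^{0,b}_\delta}$ I would freeze $k$, conjugate by $e^{ik^3t}$, and invoke the standard fact that multiplication by a fixed cutoff at scale $\delta$ maps $H^b(\mathbb R_t)\to H^{b'}(\mathbb R_t)$ with norm $\lesssim\delta^{b-b'}$ when $-1/2<b'<b<1/2$; since $\eta(t/\delta)\widetilde u$ is again an extension of $u$ when $\widetilde u$ is, summing the squares in $k$ gives the restricted-norm statement. The last estimate $\|u\|_{Z^0_\delta}\lesssim\delta^{1-}\|u\|_{Y^0_\delta}$ follows by the same mechanism applied to each piece of the $Z^0$ and $Y^0$ norms, the weight $\langle\tau-k^3\rangle^{-1}$ providing the integrability needed to recover the (almost) full power; one falls just short of $\delta^1$ because the clean $H^b\to H^{b'}$ bound requires $b'>-1/2$. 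The main obstacle is the bilinear estimate—specifically the lattice-point counting in the non-resonant regime and the bookkeeping that keeps every frequency bounded away from $0$, so that $|3nmk|$ is genuinely comparable to (a power of) the largest frequency; everything else is routine Fourier-side bookkeeping.
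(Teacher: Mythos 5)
Your proposal is correct and, on the only estimate the paper actually proves --- the final bound $\|u\|_{Z^{0}_\delta}\lesssim\delta^{1-}\|u\|_{Y^{0}_\delta}$ --- it follows essentially the same route: the $X^{0,-1/2}$ component is reduced to the fourth estimate, and the weighted $L^2(dk)L^1(d\tau)$ component is handled by exploiting that $\langle\tau-k^3\rangle^{-1}\in L^{1+}(d\tau)$ (the paper does this by H\"older followed by Young's inequality against $\delta\widehat{\eta}(\delta\tau)$, whose $L^{\infty-}$ norm is $\delta^{1-}$; it is this, rather than the $b'>-1/2$ restriction you cite, that produces the loss in that component). For the first four estimates the paper simply cites Bourgain, and your sketches are the standard arguments from that reference.
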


The only statement in this lemma which is not explicit in \cite{Bou2} is the last one. By the forth estimate
of the lemma, it suffices to consider the second part of the $Z^0_\delta$ norm, which follows from
\begin{align*}
&\Big\|\frac{\widehat{u\eta(t/\delta)}(k,\tau)}{\langle \tau-k^3\rangle}\Big\|_{L^2(dk)L^1(d\tau)} \lesssim
\Big\| \widehat{u\eta(t/\delta)}(k,\tau)\Big\|_{L^2(dk)L^{\infty-}(d\tau)} \Big\|\frac1{\langle \tau-k^3\rangle}\Big\|_{L^{1+}(d\tau)} \\
&\lesssim \| \delta \widehat{\eta}(\delta \tau)\|_{L^{\infty-}(d\tau)}\Big\| \widehat{u}(k,\tau)\Big\|_{L^2(dk)L^1(d\tau)} \lesssim \delta^{1-}\Big\| \widehat{u}(k,\tau) \Big\|_{L^2(dk)L^1(d\tau)}.
\end{align*}
In the first inequality we used Holder, and in the second one we used Youngs inequality in the $\tau$ variable.

To prove Theorem~\ref{thm:I1}, using Duhamel, we write
$$
\Phi u(t)=\eta(t) e^{Lt}u_0-\eta(t)\int_0^t e^{(t-s)L} F(s) ds,
$$
where $F=\frac12\partial_x u^2+\gamma u-f$. Applying the estimates in the lemma one can easily see that $\Phi$
is a contraction on $Y^0_\delta$ and in addition it belongs to $C([0,\delta];L^2)$. Finally the global well-posedness follows from the energy bound \eqref{L2bound}.

We close this section by stating a Strichartz estimate by Bourgain \cite{Bou2} which will be useful in the proof of Theorem~\ref{theo:main1}
\begin{prop}\cite{Bou2} \label{prop:B}For any $\eps>0$ and $b>1/2$, we have
$$\|\chi_{[-\delta,\delta]}(t)u\|_{L^6_{t,x}(\mathbb R \times \mathbb T)}\leq C_{\eps,b} \|u\|_{X^{\eps,b}_\delta}.$$
\end{prop}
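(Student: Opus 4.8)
The plan is to deduce this estimate from its free‑evolution counterpart by the standard $X^{s,b}$ transference principle, and to prove the latter by a direct count of lattice points on the characteristic variety of the cubed free solution. By transference (which needs $b>\tfrac12$), it suffices to show that for a fixed nonnegative smooth compactly supported cutoff $\eta$ with $\eta\equiv1$ on $[-1,1]$ one has $\|\eta(t)\,e^{tL}\phi\|_{L^6_{t,x}(\mathbb R\times\mathbb T)}\les_\eps\|\phi\|_{H^\eps}$. Indeed, writing $u(x,t)=\int_{\mathbb R}e^{it\lambda}(e^{tL}g_\lambda)(x)\,d\lambda$ with $\widehat{g_\lambda}(k)=\widehat u(k,k^3+\lambda)$, Minkowski's inequality, the invariance $\|e^{it\lambda}e^{tL}g_\lambda\|_{L^6}=\|e^{tL}g_\lambda\|_{L^6}$, the free estimate, and Cauchy--Schwarz in $\lambda$ against $\langle\lambda\rangle^{-b}\in L^2_\lambda$ (the sole place $b>\tfrac12$ enters) yield $\|\eta u\|_{L^6}\les_\eps\|u\|_{X^{\eps,b}}$; since $\delta<1$ one has $\chi_{[-\delta,\delta]}\le\eta$, so replacing $u$ by an arbitrary $\widetilde u$ agreeing with $u$ on $[-\delta,\delta]$ and passing to the infimum in the restricted norm gives the stated bound, with a constant independent of $\delta$.

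For the free estimate I would raise to the sixth power, using $\|\eta\,e^{tL}\phi\|_{L^6_{t,x}}^6=\|(\eta\,e^{tL}\phi)^3\|_{L^2_{t,x}}^2$. Writing $\Psi=\eta^3$ (Schwartz), taking the space--time Fourier transform of the cube, and using the triangle inequality to reduce to $a_k:=|\widehat\phi(k)|\ge0$ (and to $|\widehat\Psi|$, still Schwartz), the claim becomes
\[
\sum_{k\in\mathbb Z}\int_{\mathbb R}\Big(\sum_{k_1+k_2+k_3=k}a_{k_1}a_{k_2}a_{k_3}\,\big|\widehat\Psi\big(\tau-(k_1^3+k_2^3+k_3^3)\big)\big|\Big)^2\,d\tau\;\les_\eps\;\|\phi\|_{H^\eps}^6 .
\]
The algebraic input is the identity $k_1^3+k_2^3+k_3^3-k^3=-3(k_1+k_2)(k_2+k_3)(k_3+k_1)$, valid when $k=k_1+k_2+k_3$. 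Setting $n:=(k_1+k_2)(k_2+k_3)(k_3+k_1)$, the phase inside $\widehat\Psi$ is $k^3-3n$, so the Schwartz weight pins $\tau$ to within $O(1)$ of $k^3-3n$ (modulo rapidly decaying tails); in particular $|n|\approx|\tau-k^3|$ on the bulk.

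I would then split the frequency sum according to $n=0$ or $n\ne0$. If $n=0$ one of the pairwise sums vanishes, say $k_1+k_2=0$, which forces $k_3=k$; then $\sum_{k_1}a_{k_1}a_{-k_1}\le\|\phi\|_{L^2}^2$ by Cauchy--Schwarz and $\sum_k a_k^2\int_{\mathbb R}|\widehat\Psi|^2\les\|\phi\|_{L^2}^2$, so the resonant part contributes $\les\|\phi\|_{L^2}^6$ with no derivative loss (the three symmetric families and their pairwise overlaps are handled by inclusion--exclusion). For $n\ne0$ the key is the counting bound: since $k_1+k_2$ and $k_2+k_3$ are nonzero divisors of $n$ which, together with $n$, determine $(k_1,k_2,k_3)$, the divisor estimate gives
\[
\#\{(k_1,k_2,k_3)\in\mathbb Z^3:\,k_1+k_2+k_3=k,\ (k_1+k_2)(k_2+k_3)(k_3+k_1)=n\}\les_\eps|n|^\eps
\]
for every $k$ and every $n\ne0$. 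A Cauchy--Schwarz in $(k_1,k_2,k_3)$, performed after a dyadic localization in $|n|$ that absorbs the tails of $\widehat\Psi$ and pins $n$ (hence $\tau-k^3$) to an $O(1)$ window, pulls out this count; the $\tau$‑integral removes $\widehat\Psi$ in $L^2$, and since $|n|\les(\langle k_1\rangle\langle k_2\rangle\langle k_3\rangle)^2$ on this range, $|n|^\eps\les\langle k_1\rangle^\eps\langle k_2\rangle^\eps\langle k_3\rangle^\eps$, which turns the emerging $\|\phi\|_{L^2}^6$ into $\|\phi\|_{H^\eps}^6$ after relabeling $\eps$.

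The hard part is the nonresonant estimate together with the bookkeeping of the $\eps$‑loss: a naive bound only gives $\|\phi\|_{L^2}^6$ times a constant that grows in $\tau$, and it is precisely the interplay of the divisor bound $|n|^\eps$ with the Schwartz localization $|n|\approx|\tau-k^3|$ that redistributes this growth as an $\langle k_j\rangle^\eps$ weight on each frequency, i.e.\ as the genuine $H^\eps$ loss recorded in the statement; one cannot avoid at least an $\eps$‑loss. A secondary subtlety is that the resonant terms, structurally ``the enemy'', are in fact harmless because $k_i+k_j=0$ forces the third frequency to equal $k$ while the residual pair‑sum is square‑summable --- one must only avoid double counting across the three families. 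Finally, the transference step uses $b>\tfrac12$ in an essential way, the $\lambda$‑integral being square‑summable if and only if $2b>1$.
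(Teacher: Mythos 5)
The paper offers no proof of this proposition --- it is quoted directly from \cite{Bou2} --- so the only meaningful comparison is with Bourgain's original argument, which is exactly what you have reconstructed: transference to the free evolution using $b>\tfrac12$, cubing and passing to the space--time Fourier side, the factorization $k^3-k_1^3-k_2^3-k_3^3=3(k_1+k_2)(k_2+k_3)(k_3+k_1)$, a harmless resonant contribution, and the divisor bound $\#\{\vec k:\ n(\vec k)=m\}\lesssim_\eps |m|^\eps$ combined with the localization $|n|\approx|\tau-k^3|$ to convert the loss into the $H^\eps$ weight. Your write-up is correct (including the reduction of the restricted norm via an extension $\tilde u$ and the $\delta$-independence of the constant for $\delta<1$), so there is nothing to fix.
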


\section{Proof of Theorem~\ref{theo:main1}}\label{sec:evol}
Let $v=\big(\widehat{f}/{(i k)^3}\big)^\vee=\partial_x^{-3} f$, and $w=u-v$. Then $w$ satisfies
\begin{equation}\label{w_eqn}
\left\{
\begin{matrix}
w_{t}+w_{xxx} +\gamma w+\gamma v+ \frac12 \partial_x [(w+v)^2]=0, & x \in {\mathbb T}, & t\in {\mathbb R},\\
w(x,0)=u_0(x)-v(x) \in \dot L^2(\mathbb T),
\end{matrix}
\right.
\end{equation}

Using the notation $u(x,t)=\sum_k u_k(t) e^{ikx}$, and $v(x)=\sum_k v_ke^{ikx}$, we write  \eqref{w_eqn} on the Fourier side:
$$\partial_t w_k=ik^3w_k-\gamma w_k-\gamma v_k-\frac{ik}{2}  \sum_{k_1+k_2=k}(w_{k_1}+v_{k_1})
(w_{k_2}+v_{k_2}),\,\,\,\,\,\,w_k(0)=u_k(0) -v_k.$$
Because of the mean zero assumption on $u$ and $f$,  there are no zero harmonics in this equation.
Using the transformations
\begin{align*}
w_k(t)=z_k(t)e^{-\gamma t+ik^3t},\,\,\,\,\,\,\,
v_k(t)=y_k(t)e^{-\gamma t+ik^3t},
\end{align*}
and the identity
$$(k_1+k_2)^3-k_1^3-k_2^3=3(k_1+k_2)k_1k_2,$$
the equation can be written in the form
\be\label{v_eq}
\partial_t z_k=-\gamma y_k-\frac{ik}{2} e^{-\gamma t}   \sum_{k_1+k_2=k}e^{-i3kk_1k_2t}(z_{k_1}+y_{k_1})
(z_{k_2}+y_{k_2}).
\ee

We start with the following proposition which follows from differention by parts.
\begin{prop}\label{thm:dbp}
The system \eqref{v_eq} can be written in the following form:
\begin{align}\label{v_eq_dbp}
&\partial_t[z-e^{-\gamma t}B(z+y,z+y)]_k =e^{-2\gamma t}\rho_k-\gamma y_k+ \gamma e^{-\gamma t} B(z+y,z+y)_k\\
& -\frac13 e^{-\gamma t} B(z+y,\partial_t y-\gamma y)_k +  e^{-2\gamma t}R(z+y)_k,\nonumber
\end{align}
where we define $B(u,v)_0=\rho_0=R(u)_0=0$, and for $k\neq 0$, we define
\begin{align*}
B(u,v)_k&= \frac16\sum_{k_1+k_2=k}\frac{e^{-3ikk_1k_2t}  u_{k_1}v_{k_2}}{k_1k_2}
\\
\rho_k&=-\frac{i}{6k}  |z_{k}+y_{k}|^2 (z_k+y_k)  \\
R(u)_k&=\frac{i}{6}\sum_{\stackrel{k_1+k_2+k_3=k}{(k_1+k_2)(k_1+k_3)(k_2+k_3)\neq 0}} \frac{e^{-3it(k_1+k_2)(k_2+k_3)(k_3+k_1)}}{k_1} u_{k_1}u_{k_2}u_{k_3}
\end{align*}
\end{prop}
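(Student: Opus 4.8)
The plan is to start from the Fourier-side equation \eqref{v_eq} and apply the differentiation-by-parts (normal form) identity to the bilinear nonlinearity, exactly as in the method of Babin–Ilyin–Titi and \cite{ET}. Write the nonlinear term as
\[
-\frac{ik}{2}e^{-\gamma t}\sum_{k_1+k_2=k}e^{-3ikk_1k_2 t}(z_{k_1}+y_{k_1})(z_{k_2}+y_{k_2}),
\]
and split the sum according to whether the phase $kk_1k_2$ vanishes. Since $k=k_1+k_2\neq 0$ (no zero harmonics, by the mean-zero hypothesis), the phase vanishes precisely when $k_1=0$ or $k_2=0$; but again there are no zero harmonics, so in fact \emph{every} term in the bilinear sum is non-resonant. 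This is the structural point flagged in the outline: the quadratic nonlinearity is purely non-resonant. Hence on each term I can write $e^{-3ikk_1k_2 t} = \frac{-1}{3ikk_1k_2}\,\partial_t e^{-3ikk_1k_2 t}$ and integrate by parts in $t$, i.e. move the $\partial_t$ onto the rest of the summand. The algebraic factor $\frac{-1}{3ikk_1k_2}$ cancels the $k$ in front of the sum and produces the $\frac{1}{k_1k_2}$ weights appearing in $B(u,v)_k$, with the prefactor $-\frac{ik}{2}\cdot\frac{-1}{3ik}=\frac{1}{6}$ matching the definition of $B$.

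Next I would carry out the $\partial_t$ that lands on the remaining factors. Differentiating the exponential prefactor $e^{-\gamma t}$ gives back $\gamma e^{-\gamma t}B(z+y,z+y)_k$, which is the third term on the right of \eqref{v_eq_dbp}. Differentiating the product $(z_{k_1}+y_{k_1})(z_{k_2}+y_{k_2})$ produces, by the product rule and the symmetry $k_1\leftrightarrow k_2$ of the summand, a term of the form $e^{-\gamma t}B(z+y,\partial_t(z+y))_k$ up to a combinatorial factor. Here I substitute the equation: $\partial_t z_{k_j}$ is given by \eqref{v_eq}, and $\partial_t y_{k_j}=\partial_t y_{k_j}$ is kept as is. The $\partial_t z$ piece, when inserted, contributes two things: the $-\gamma y$ term from \eqref{v_eq} combines with $\partial_t y$ to give the $-\frac13 e^{-\gamma t}B(z+y,\partial_t y-\gamma y)_k$ term, and the quadratic part of $\partial_t z$ (carrying another $e^{-\gamma t}$) feeds a \emph{cubic} expression with total factor $e^{-2\gamma t}$. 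That cubic expression must be sorted, once more, into its resonant and non-resonant parts: the non-resonant part (phases $(k_1+k_2)(k_2+k_3)(k_3+k_1)\neq 0$) is exactly $e^{-2\gamma t}R(z+y)_k$, and the resonant part (two of the three "partial sums" forced to a configuration making the phase vanish, which for mean-zero data collapses to $k_i+k_j=0$, i.e. $k_3=k$ after relabelling) collapses to the diagonal term $e^{-2\gamma t}\rho_k=-\frac{i}{6k}e^{-2\gamma t}|z_k+y_k|^2(z_k+y_k)$. The coefficient bookkeeping — tracking the $\frac16$, the $\frac13$, the symmetrization factors, and checking the cancellations in the resonant sum that leave only the clean diagonal $\rho_k$ — is the one genuinely delicate computation; everything else is the mechanical normal-form transformation.

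The main obstacle, then, is purely combinatorial: correctly enumerating the resonant set for the cubic term and verifying that, after using $k_1+k_2+k_3=k\neq 0$ and the absence of zero modes, the only surviving resonances are the "complete" ones that produce $\rho_k$, while the partial resonances either vanish identically or recombine. A clean way to organize this is to introduce the standard substitution $m_1=k_1+k_2$, $m_2=k_2+k_3$, $m_3=k_3+k_1$ (so $m_1+m_2+m_3=2k$ and the cubic phase is $(k_1+k_2)^3$-type rewritten via the identity already quoted), and split $\{m_1m_2m_3=0\}$ versus its complement; on the resonant set one of the $m_i$ vanishes, which pins down one $k_j$, and the sum over the remaining free index telescopes. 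Once the identity \eqref{v_eq_dbp} is established term by term with matching constants, the proposition is proved; no estimates are needed at this stage — the analytic work of bounding $B$, $\rho$, and $R$ in $X^{s,b}$ and $Y^s$ spaces (using Proposition~\ref{prop:B} and Lemma~\ref{lem:freeys}) comes afterward in the proof of Theorem~\ref{theo:main1}.
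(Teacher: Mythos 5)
Your proposal follows the paper's proof essentially step for step: differentiation by parts on the (purely non-resonant) bilinear term, substitution of \eqref{v_eq} for $\partial_t z$ in the factor that gets differentiated, the factorization $kk_1+\mu\nu=(k_1+\mu)(k_1+\nu)$ turning the cubic phase into $3t(k_1+k_2)(k_2+k_3)(k_3+k_1)$, and a resonant/non-resonant splitting of the resulting trilinear sum. The constants and the terms $\gamma e^{-\gamma t}B$, $-\tfrac13 e^{-\gamma t}B(z+y,\partial_t y-\gamma y)$ and $e^{-2\gamma t}R$ all come out as you describe.

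The one place where your argument is an assertion rather than a proof is the disposal of the \emph{singly} resonant cubic terms. After the substitution the trilinear sum carries the constraint $k_2+k_3\neq 0$ (inherited from $k_2\neq 0$ in the bilinear sum), so the resonant set splits into three pieces: the doubly resonant one $\{k_1=-k,\ k_2=k_3=k\}$, which produces exactly $e^{-2\gamma t}\rho_k$, and the two singly resonant ones $\{k_1+k_2=0,\ k_1+k_3\neq 0\}$ and $\{k_1+k_3=0,\ k_1+k_2\neq 0\}$. Each of the latter contributes $(z_k+y_k)\sum_{|j|\neq|k|}|z_j+y_j|^2/j$. This does not ``telescope'', as you suggest; it vanishes because the summand is odd under $j\mapsto -j$, which in turn uses $z_{-j}=\overline{z_j}$ and $y_{-j}=\overline{y_j}$, i.e.\ the standing assumption that $u$ and $f$ are real valued. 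Without invoking that conjugate symmetry there is no reason for these sums to be zero, and the identity \eqref{v_eq_dbp} as stated --- with $\rho_k$ as the \emph{only} resonant remnant --- would not follow. So the skeleton of your argument is correct and matches the paper's, but the key cancellation that justifies the clean form of $\rho_k$ rests on the reality of the data, which your write-up never uses.
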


\begin{proof}[Proof of Proposition~\ref{thm:dbp}]
Since $e^{-3ikk_1k_2t}=\partial_{t}( \frac{i}{3kk_1k_2}e^{-3ikk_1k_2t})$, using differentiation by parts we can rewrite \eqref{v_eq} as
\begin{align*}
\partial_{t}z_{k}&=-\gamma y_k+ e^{-\gamma t}  \partial_{t}\Big(   \sum_{k_1+k_2=k}\frac{e^{-3ikk_1k_2t} (z_{k_1}+y_{k_1})(z_{k_2}+y_{k_2})  }{ 6 k_1k_2}\Big)\\
&-
e^{-\gamma t}  \sum_{k_1+k_2=k}\frac{e^{-3ikk_1k_2t}(z_{k_1}+y_{k_1})\partial_t(z_{k_2}+y_{k_2})  }{ 3 k_1k_2}=\\
&=-\gamma y_k+ \partial_{t} \Big (e^{-\gamma t} \sum_{k_1+k_2=k}\frac{e^{-3ikk_1k_2t} (z_{k_1}+y_{k_1})(z_{k_2}+y_{k_2})  }{ 6 k_1k_2}\Big)\\
&
+\gamma e^{-\gamma t} \sum_{k_1+k_2=k}\frac{e^{-3ikk_1k_2t} (z_{k_1}+y_{k_1})(z_{k_2}+y_{k_2})  }{ 6 k_1k_2}\\
&
-e^{-\gamma t}  \sum_{k_1+k_2=k}\frac{e^{-3ikk_1k_2t}(z_{k_1}+y_{k_1})\partial_t(z_{k_2}+y_{k_2})  }{ 3 k_1k_2}.
\end{align*}
Recalling the definition of  $B$, we can rewrite this equation in the form:
\begin{align}\label{eq:partialv}
\partial_t[z-e^{-\gamma t}B(z+y,z+y)]_k &=-\gamma y_k+ \gamma e^{-\gamma t}B(z+y,z+y)_k\\ \nonumber
&-e^{-\gamma t}  \sum_{k_1+k_2=k}\frac{e^{-3ikk_1k_2t}(z_{k_1}+y_{k_1})\partial_t(z_{k_2}+y_{k_2})  }{ 3 k_1k_2}.
\end{align}
Note that since $z_0=y_0=0$, in the sums above $k_1$ and $k_2$ are not zero. Using \eqref{v_eq}, we have
\begin{align*}
&\sum_{k_1+k_2=k}\frac{e^{-3ikk_1k_2t}(z_{k_1}+y_{k_1})\partial_t(z_{k_2}+y_{k_2})  }{ 3 k_1k_2}  = \sum_{k=k_1+k_2}\frac{e^{-3ikk_1k_2t}(z_{k_1}+y_{k_1})(-\gamma y_{k_2}+\partial_ty_{k_2})  }{ 3 k_1k_2}\\
&-\frac{i}{6} e^{-\gamma t}  \sum_{k=k_1+k_2}\frac{e^{-3ikk_1k_2t}(z_{k_1}+y_{k_1})}{k_1}  \sum _{\mu+\nu=k_2\neq 0}e^{-3itk_2\mu\nu} (z_{\mu}+y_{\mu}) (z_{\nu} +y_\nu)\\
&= \frac13 B(z+y,\partial_t y-\gamma y)_k \\
&-\frac{i}{6} e^{-\gamma t} \sum_{\stackrel{k=k_1+\mu+\nu}{\mu+\nu\neq 0}}\frac{e^{-3it[kk_1(\mu+\nu)+\mu\nu(\mu+\nu)]}}{k_1}(z_{k_1}+y_{k_1}) (z_{\mu}+y_{\mu}) (z_{\nu} +y_\nu).
\end{align*}
Using the identity
$$kk_1+\mu\nu=(k_1+\mu+\nu)k_1+\mu\nu=(k_1+\mu)(k_1+\nu)$$
and  by renaming the variables $\mu\to k_2$, $\nu\to k_3$, we have that
\begin{multline}\label{reso1}
\sum_{k_1+k_2=k}\frac{e^{-3ikk_1k_2t}(z_{k_1}+y_{k_1})\partial_t(z_{k_2}+y_{k_2})  }{ 3 k_1k_2}  =
\frac13 B(z+y,\partial_t y-\gamma y)_k \\ -\frac{i}{6} e^{-\gamma t} \sum_{\stackrel{k=k_1+k_2+k_3}{k_2+k_3\neq 0}}\frac{e^{-3it(k_1+k_2)(k_2+k_3)(k_1+k_3)}}{k_1}(z_{k_1}+y_{k_1}) (z_{k_2}+y_{k_2})(z_{k_3}+y_{k_3}).
\end{multline}
Using \eqref{reso1}, we can rewrite \eqref{eq:partialv} as
\begin{align*}
&\partial_t[z-e^{-\gamma t}B(z+y,z+y)]_k =-\gamma y_k+ \gamma e^{-\gamma t}B(z+y,z+y)_k
-\frac13 e^{-\gamma t} B(z+y,\partial_t y-\gamma y)_k \\
&+ \frac{i}{6} e^{-2\gamma t}
\sum_{\stackrel{k=k_1+k_2+k_3}{k_2+k_3\neq 0}}\frac{e^{-3it(k_1+k_2)(k_2+k_3)(k_1+k_3)}}{k_1}(z_{k_1}+y_{k_1}) (z_{k_2}+y_{k_2})(z_{k_3}+y_{k_3}).
\end{align*}
Note that the set on which the phase on the trilinear term vanishes is the disjoint union of the following  sets
\begin{align*}
S_{1}&=\{k_1+k_2=0\}\cap\{k_3+k_1=0\}\cap \{k_2+k_3\neq 0\}\Leftrightarrow \{k_1=-k,\ k_2=k,\ k_3=k\},\\
S_{2}&=\{k_1+k_2=0\} \cap \{ k_3+k_1\ne 0\} \cap \{k_2+k_3\neq 0\}\Leftrightarrow \{k_1=j,\ k_2=-j,\ k_3=k,\ |j| \neq |k| \},\\
S_{3}&=\{k_3+k_1=0\}\cap\{k_1+k_2\ne 0\}   \cap \{k_2+k_3\neq 0\}\Leftrightarrow \{k_1=j,\ k_2=k,\ k_3=-j,\ |j| \neq |k| \}.
\end{align*}
Thus, using the definition of $R(u)$, we have
\begin{align*}
&\partial_t[z-e^{-\gamma t}B(z+y,z+y)]_k =-\gamma y_k+ \gamma e^{-\gamma t}B(z+y,z+y)_k
-\frac13 e^{-\gamma t} B(z+y,\partial_t y-\gamma y)_k \\
&+  e^{-2\gamma t}R(z+y)_k+ \frac{i}{6} e^{-2\gamma t} \sum_{\ell=1}^{3}\sum_{S_{\ell}}\frac{(z_{k_1}+y_{k_1}) (z_{k_2}+y_{k_2})(z_{k_3}+y_{k_3})}{k_1}.
\end{align*}
The proposition follows if we show that the last sum above is equal to $e^{-2\gamma t}\rho_k$. Note that
\begin{align}\label{reso3}
&\sum_{\ell=1}^{3}\sum_{S_{\ell}}\frac{(z_{k_1}+y_{k_1}) (z_{k_2}+y_{k_2})(z_{k_3}+y_{k_3})}{k_1}=-\frac{|z_{k}+y_{k}|^2 (z_k+y_k) }{k}\\\nonumber
&+(z_k+y_k)\sum_{|j|\neq |k|} \frac{|z_j+y_j|^2}{j}+(z_k+y_k)\sum_{|j|\neq |k|} \frac{|z_j+y_j|^2}{j}.
\end{align}
Here we used $z_j=\overline{z_{-j}}$ and $y_j=\overline{y_{-j}}$. Using symmetry, the second line vanishes,
which yields the assertion of the proposition.
\end{proof}

Integrating \eqref{v_eq_dbp} from $0$ to $t$, we obtain
\begin{align*}
&z_k(t)-z_k(0)=e^{-\gamma t}B(z+y,z+y)_k(t)-B(z+y,z+y)_k(0)\\
&+\int_0^t \Big(e^{-2\gamma r}\rho_k(r)-\gamma y_k(r)+\gamma e^{-\gamma r} B(z+y,z+y)_k(r)-\frac13 e^{-\gamma r} B(z+y,\partial_r y-\gamma y)_k(r)\Big) dr\\
&+ \int_0^t    e^{-2\gamma r}R(z+y)_k(r) dr
\end{align*}
Transforming back to the $w$, $v$ variables,  we have
\begin{align}\nonumber
&w_k(t)-e^{-\gamma t+ik^3t}w_k(0)  = \mathcal{B}(w+v,w+v)_k(t)-e^{-\gamma t+ik^3t}\mathcal{B}(w+v,w+v)_k(0)\\
&+\int_0^t e^{-\gamma(t-r)+ik^3(t-r)}\Big( \tilde\rho_k(r)-\gamma v_k(r)+\gamma   \mathcal{B}(w+v,w+v)_k(r)-\frac13  \mathcal{B}(w+v,e^{Lr}\partial_r e^{-Lr}v)_k(r)\Big)  dr\nonumber\\
&+\int_0^t e^{-\gamma(t-r)+ik^3(t-r)}\mathcal{R}(w+v)_k(r)dr,\nonumber
\end{align}
where
\begin{align*}
\mathcal{B}(u,v)_k&=\frac16\sum_{k_1+k_2=k}\frac{  u_{k_1} v_{k_2}}{k_1k_2},\\
\tilde\rho_k&=-\frac{i}{6k}  |w_{k}+v_{k}|^2 (w_k+v_k)\\
\mathcal{R}(u)_k&=\frac{i}{6}\sum_{\stackrel{k_1+k_2+k_3=k}{(k_2+k_3)(k_1+k_2)(k_1+k_3)\neq 0}} \frac{u_{k_1}u_{k_2}u_{k_3}}{k_1}.
\end{align*}
Since $u=w+v$, we have
\begin{align}\label{new_u}
&u_k(t)-e^{-\gamma t+ik^3t}u_k(0)  = v_k-e^{-\gamma t+ik^3t}v_k +\mathcal{B}(u,u)_k(t)- e^{-\gamma t+ik^3t} \mathcal{B}(u,u)_k(0)\\
&+\int_0^t e^{(-\gamma+ik^3)(t-r)}\Big(-\frac{i|u_k|^2u_k}{6k}  -\gamma v_k(r)+\gamma   \mathcal{B}(u,u)_k(r)-\frac13  \mathcal{B}(u,e^{Lr}\partial_r e^{-Lr}v)_k(r)\Big)  dr\nonumber\\
&+\int_0^t e^{(-\gamma+ik^3)(t-r)}\mathcal{R}(u)_k(r)dr,\nonumber
\end{align}

\begin{lemma}\label{apriori} For $s\in(0,1)$, we have
$$\|\mathcal{B}(u,v)\|_{H^{s }}\lesssim \|u\| \|v\|, \text{ and }\,\,\,\,\,\,
\Big\|\frac{|u_k|^2u_k}{6k}\Big\|_{H^{s}}  \leq \|u\|^3.
$$
\end{lemma}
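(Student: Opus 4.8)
The plan is to prove both bounds by a direct application of Cauchy--Schwarz together with a convolution (Young's inequality) argument, exploiting the factors $1/(k_1 k_2)$ and $1/k$ that provide summability in the high-frequency regime. For the bilinear term, I would start from
\[
|\mathcal{B}(u,v)_k| \leq \frac16 \sum_{k_1+k_2=k} \frac{|u_{k_1}|\,|v_{k_2}|}{|k_1|\,|k_2|},
\]
and then weight by $|k|^s$. The key observation is that for $k_1+k_2=k$ with $k_1,k_2\neq 0$, one has $|k|^s \lesssim |k_1|^s + |k_2|^s$, so $|k|^s/(|k_1||k_2|) \lesssim |k_1|^{s-1}/|k_2| + 1/(|k_1||k_2|^{1-s})$. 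Since $s\in(0,1)$, each of the sequences $|k_1|^{s-1}$, $|k_2|^{s-1}$ lies in $\ell^2(\Z\setminus\{0\})$, while $|u_{k_1}|, |v_{k_2}|$ lie in $\ell^2$. Writing the sum as a convolution, $\||k|^s\mathcal{B}(u,v)_k\|_{\ell^2_k}$ is then bounded by a sum of terms of the form $\|(a * b)\|_{\ell^2}$ where one factor is in $\ell^1$ (the summable power of $|k_j|$) and the other in $\ell^2$ (the $\ell^2$ sequence $u$ or $v$); Young's inequality $\ell^1 * \ell^2 \hookrightarrow \ell^2$ finishes the estimate, giving $\|\mathcal{B}(u,v)\|_{H^s}\lesssim \|u\|\,\|v\|$.

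For the cubic term $|u_k|^2 u_k/(6k)$, the estimate is in fact elementary and requires no convolution: I would simply bound
\[
\Big\| \frac{|u_k|^2 u_k}{6k}\Big\|_{H^s}^2 = \frac{1}{36}\sum_{k\neq 0} \frac{|k|^{2s}}{|k|^2}\,|u_k|^6 \leq \frac{1}{36}\sum_{k\neq 0} |u_k|^6,
\]
using $|k|^{2s-2}\leq 1$ for $|k|\geq 1$ and $s<1$. Then $\sum_k |u_k|^6 = \|u\|_{\ell^6}^6 \leq \|u\|_{\ell^2}^6 = \|u\|^6$ by the embedding $\ell^2\hookrightarrow\ell^6$, which yields $\big\||u_k|^2 u_k/(6k)\big\|_{H^s}\leq \|u\|^3$ (the constant $1/6$ being harmless, absorbed into the inequality as stated). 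Note this bound even holds for $s=1$.

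The only mildly delicate point — and the one worth spelling out carefully — is the splitting $|k|^s\lesssim |k_1|^s+|k_2|^s$ and the bookkeeping that turns the resulting double sum into a genuine $\ell^1*\ell^2$ convolution; in particular one must check that after pulling out the $\ell^1$ factor $|k_j|^{s-1}$ (which needs $s<1$ strictly for $\ell^2$-summability of $|k_j|^{s-1}$, but actually only $s-1<-1/2$, i.e. $s<1/2$, for $\ell^1$... ) — more precisely, $|k_j|^{s-1}\in\ell^2$ iff $2(1-s)>1$ iff $s<1/2$, so for $s\in[1/2,1)$ one instead keeps $|k_j|^{s-1}\in\ell^{q}$ for suitable $q$ and uses the general Young inequality $\ell^q*\ell^2\hookrightarrow\ell^2$ with $1/q+1/2 = 1+1/2$, i.e. $q=1$ — hence one should just split as $1/(|k_1||k_2|^{1-s})$ with the $\ell^1$-sequence being $1/|k_1|$... no: $1/|k_1|\notin\ell^1$ either. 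The cleanest route, which I would adopt, is: write $|k|^s/(|k_1||k_2|) \lesssim |k_1|^{s-1}|k_2|^{-1} + |k_1|^{-1}|k_2|^{s-1}$, observe both $|k_j|^{-1}$ and $|k_j|^{s-1}$ are in $\ell^{2+}$, and apply Young with exponents making one factor $\ell^1$ by pairing $|k_j|^{-1}\in\ell^{1+}$... The honest fix is to use instead that $|k_1|^{-1}|k_2|^{s-1}$ with the $u$-sequence on the $|k_2|^{s-1}$ slot: bound $\sum_{k_1+k_2=k}|u_{k_1}| |k_1|^{-1} |v_{k_2}||k_2|^{s-1}$ as the convolution of $(|u_{k_1}||k_1|^{-1})\in\ell^1$ (by Cauchy--Schwarz, since $|k_1|^{-1}\in\ell^2$ and $u\in\ell^2$) with $(|v_{k_2}||k_2|^{s-1})\in\ell^2$ (since $s<1$ gives $|k_2|^{s-1}$ bounded), yielding $\ell^1*\ell^2\hookrightarrow\ell^2$ with norm $\lesssim \|u\|\,\|v\|$. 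The symmetric term is handled the same way. I expect this Cauchy--Schwarz-then-Young packaging to be the main (though still routine) obstacle.
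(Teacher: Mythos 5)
Your final argument is correct and is essentially the paper's proof: the paper reduces by symmetry to $|k_1|\geq|k_2|$ (so that $|k|^s/|k_1|\lesssim |k|^{s-1}\leq 1$) where you split $|k|^s\lesssim|k_1|^s+|k_2|^s$, but both then run the same Cauchy--Schwarz step (putting $|u_{k}|/|k|\in\ell^1$) followed by Young's inequality $\ell^1*\ell^2\hookrightarrow\ell^2$, and your treatment of the cubic term via $|k|^{s-1}\leq 1$ and $\ell^2\hookrightarrow\ell^6$ is exactly the paper's. The meandering in your last paragraph resolves itself correctly; only the final packaging you settle on is needed.
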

\begin{proof}
By symmetry we can assume in the estimate for   $\mathcal{B}(u,v)$ that $|k_1|\geq |k_2|$. Thus, for $s<1$, we have
\begin{align*}
\|\mathcal{B}(u,v)\|_{H^{s}} &\lesssim \Big\|\sum_{k_1+k_2=k,\,\,\,|k_1|\geq|k_2|}\frac{  |u_{k_1} v_{k_2}|  }{ |k_1k_2|}\Big\|_{H^{s}}
\lesssim  \Big\|\sum_{k_1+k_2=k,\,\,\,|k_1|\geq|k_2|}\frac{  |u_{k_1}|  |v_{k_2}| |k|^{s-1}}{|k_2|}\Big\|\\
&\lesssim \Big\|\sum_{k_1+k_2=k,\,\,\,|k_1|\geq|k_2|}\frac{  |u_{k_1}| |v_{k_2}|  }{|k_2|}\Big\|\lesssim \Big\|\frac{ v_{k}   }{k}\Big\|_{\ell^1} \big\|u_{k}\big\|
\lesssim  \|  v\|  \big\|k^{-1}\big\|_{\ell^2} \|u\|\lesssim \|u\|\|v\|.
\end{align*}
In the last line we used Young's inequality and Cauchy-Schwarz.

Now note that for $s<1$,
$$
\Big\|\frac{|u_k|^2u_k}{6k}\Big\|_{H^{s}}\leq \|u^3\|\leq \|u\|^3. \qedhere
$$
\end{proof}

Using the estimates in Lemma~\ref{apriori} in the equation \eqref{new_u}, we obtain (for $s<1$)
\begin{align}\nonumber
&\|u(t)-e^{-\gamma t}e^{tL}g\|_{H^{s}}\lesssim \|v\|_{H^s}+\|u(t)\|^2+\|u(0)\|^2  \\
&+\int_0^t \Big(\| u(r)\|^3 +\|v\|_{H^s}+\|u(r)\|^2+\|u(r)\|\|\partial_r e^{-Lr} v\|\Big) dr  \nonumber \\
&+\Big\|\int_0^t e^{(-\gamma+ik^3)(t-r)}\mathcal{R}(u)_k(r) dr\Big\|_{H^{s}} \nonumber\\
\label{udelta} &\lesssim \|f\|+\|u(t)\|^2 +\|u(0)\|^2 +\int_0^t \Big(\| u(r)\|^3 +\|f\| +\|u(r)\|^2+\|u(r)\|\|f\|\Big) dr  \\
&+\Big\|\int_0^te^{-\gamma(t-r)} e^{L(t-r)}\mathcal{R}(u)(r) \, dr\Big\|_{H^{s}}, \nonumber
\end{align}
where the implicit constant in the second inequality depends on $\gamma$ and $s$, and
$$\mathcal{R}(u)(r,x)=\sum_{k\neq 0} \mathcal{R}( u)_k(r)  e^{ikx}.$$
We also used the facts that $\partial_re^{-Lr}v=-Le^{-Lr}v=-e^{-Lr}f$ and
$$\|v\|_{H^s} =\|f\|_{H^{s-3}}\leq \|f\|.$$
Since our nonlinearity after differentiation by parts is not $uu_x$ anymore, we will be able to avoid the $Y^{s_1}$ and $Z^{s_1}$ spaces. Instead we will use the embedding $X^{s_1,b}\subset L^\infty_t H^{s_1}_x$ for $b>1/2$ and the following lemma.  Let $\eta$ be a smooth function supported on $[-2,2]$ and $\eta(t)=1$ for $|t|\leq 1$.
\begin{lemma} \label{lem:duhamel}
Let $-\frac12<b^\prime\leq 0$ and $b=b^\prime+1$. Then for any $\delta<1$
\be\label{lem:xsb}\Big\|\eta(t)\int_0^t e^{-\gamma(t-r)}e^{L(t-r)} F(r) dr\Big\|_{X^{s,b}_\delta}\lesssim \|F\|_{X^{s,b^\prime}_\delta},
\ee
where the implicit constant depends on $\gamma$ and $b$.
\end{lemma}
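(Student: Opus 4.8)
The plan is to reduce \eqref{lem:xsb} to the classical inhomogeneous Duhamel (energy) estimate for the Airy group in Bourgain spaces, handling the extra damping factor $e^{-\gamma(t-r)}$ by a change of unknown together with a harmless time cutoff. Throughout I will use the routine fact that multiplication in $t$ by a fixed Schwartz function is bounded on every $X^{s,b}$: since $\langle\tau-k^3\rangle^{b}\lesssim\langle\tau-\sigma\rangle^{|b|}\langle\sigma-k^3\rangle^{b}$, such a multiplication is a convolution in $\tau$ against a rapidly decaying symbol and is absorbed by Young's inequality, with a constant depending on the function and on $b$.

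First I would pass to an unrestricted estimate. Given $F$, choose an extension $\tilde F$ to $\Real$ with $\|\tilde F\|_{X^{s,b^\prime}}\le 2\|F\|_{X^{s,b^\prime}_\delta}$. For $|t|\le\delta$ the integral $\int_0^t$ only involves values $F(r)$ with $|r|\le\delta$, so $\eta(t)\int_0^t e^{-\gamma(t-r)}e^{L(t-r)}\tilde F(r)\,dr$ is a legitimate extension of the left-hand side of \eqref{lem:xsb} from $[-\delta,\delta]$. It therefore suffices to prove the unrestricted bound
\[
\Big\|\eta(t)\int_0^t e^{-\gamma(t-r)}e^{L(t-r)} G(r)\,dr\Big\|_{X^{s,b}}\lesssim \|G\|_{X^{s,b^\prime}}
\]
for all $G$ on $\Real$, and then take the infimum over such extensions.

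Next I would remove the damping. Writing $e^{-\gamma(t-r)}=e^{-\gamma t}e^{\gamma r}$ and noting that $e^{\gamma r}$ is a scalar, the operator above equals $\eta(t)e^{-\gamma t}\int_0^t e^{L(t-r)}\big(e^{\gamma r}G(r)\big)\,dr$. Because $\eta$ is supported in $[-2,2]$ and $\int_0^t$ with $|t|\le 2$ only involves $|r|\le 2$, I may first replace $G$ by $\psi G$, where $\psi$ is smooth, compactly supported and $\equiv 1$ on $[-2,2]$, without changing the left-hand side. Then $e^{\gamma r}\psi(r)$ is a fixed Schwartz function of $t$, so by the multiplier fact above $G_1:=e^{\gamma\cdot}\psi G$ satisfies $\|G_1\|_{X^{s,b^\prime}}\lesssim_\gamma\|G\|_{X^{s,b^\prime}}$, and the remaining object $\eta(t)e^{-\gamma t}\int_0^t e^{L(t-r)}G_1(r)\,dr$ is a genuine (undamped) Duhamel term.

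Finally I would invoke the standard inhomogeneous estimate (as in \cite{GTV,ckstt,Bou2}): for $-\tfrac12<b^\prime\le 0$, $b=b^\prime+1$, and $\eta_1$ a smooth cutoff equal to $1$ on the support of $\eta$, one has $\big\|\eta_1(t)\int_0^t e^{L(t-r)}G_1(r)\,dr\big\|_{X^{s,b}}\lesssim \|G_1\|_{X^{s,b^\prime}}$. Since $\eta_1\equiv 1$ on the support of $\eta$, I may write $\eta(t)e^{-\gamma t}\int_0^t e^{L(t-r)}G_1\,dr=\big(\eta(t)e^{-\gamma t}\big)\cdot\big(\eta_1(t)\int_0^t e^{L(t-r)}G_1\,dr\big)$ and apply the multiplier fact once more to the fixed Schwartz function $\eta(t)e^{-\gamma t}$ (here $b\in(\tfrac12,1]$), obtaining the claim with implicit constant depending only on $\gamma$ and $b$. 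I do not expect a serious obstacle here, since modulo the weight this is exactly the classical KdV Duhamel estimate; the only genuinely new point is the factor $e^{-\gamma(t-r)}$, and the one thing to watch is that $e^{\gamma r}$ is unbounded on $\Real$, which is precisely why the cutoff $\psi$ must be inserted before the change of unknown.
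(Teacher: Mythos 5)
Your proof is correct, but it takes a genuinely different route from the paper. You reduce the damped Duhamel operator to the classical undamped inhomogeneous estimate $\bigl\|\eta_1(t)\int_0^t e^{L(t-r)}G_1(r)\,dr\bigr\|_{X^{s,b}}\lesssim\|G_1\|_{X^{s,b'}}$ by splitting $e^{-\gamma(t-r)}=e^{-\gamma t}e^{\gamma r}$ and absorbing each exponential into a compactly supported Schwartz multiplier, using the standard fact that multiplication by such a function is bounded on every $X^{s,b}$; your insertion of the cutoff $\psi$ before the change of unknown is exactly the right precaution, since $e^{\gamma r}$ is unbounded. The paper instead proves the estimate directly: after the same reduction to a one-dimensional temporal bound $\|\eta(t)\int_0^t e^{-\gamma(t-r)}f(r)\,dr\|_{H^b}\lesssim\|f\|_{H^{b'}}$, it computes the Fourier transform of the damped integral explicitly, obtaining the symbol $\bigl(e^{izt}-e^{-\gamma t}\bigr)/(\gamma+iz)$, and then concludes with $\langle\tau\rangle^b\lesssim\langle\tau-z\rangle^b\langle z\rangle^b$, the bound $|\gamma+iz|^{-1}\lesssim\langle z\rangle^{-1}$, and Young's inequality. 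The paper's computation exposes the key structural point that the damping removes the singularity at $z=0$ which is the only delicate part of the classical (undamped) estimate, so the damped lemma is actually \emph{easier} than the estimate you cite as a black box, and the $\gamma$-dependence of the constant is completely explicit. Your argument is more modular and would transfer verbatim to other dispersion relations or weights, but it imports the endpoint case $b=b'+1$ of the classical estimate as an external input, whose proof requires the low-modulation Taylor expansion that the paper's direct computation avoids.
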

\begin{proof}
It suffices to prove the statement with $X^{s,b}$ norms. Note that
$$
\Big\|\eta(t)\int_0^t e^{-\gamma(t-r)}e^{L(t-r)} F(r) dr\Big\|_{X^{s,b}}=\Big\|\eta(t)\int_0^t e^{-\gamma(t-r)}e^{-Lr} F(r) dr\Big\|_{
H^{s}_xH^b_t}.
$$
Therefore it suffices to prove that
\begin{equation}\label{bbprime}
\Big\|\eta(t)\int_0^t e^{-\gamma(t-r)} f(r) dr\Big\|_{H^b}\lesssim \|f \|_{H^{b^\prime}}.
\end{equation}
Writing
\begin{align*}
\int_0^t e^{-\gamma(t-r)} f(r)dr
=\int \big[\chi_{[0,t]}e^{-\gamma(t-r)}\big]^\vee(z)\widehat{f}(z) dz
=\int \frac{e^{izt}-e^{-\gamma t}}{\gamma+iz}\widehat{f}(z)dz,
\end{align*}
we see that the Fourier transform of the function inside the norm in the left hand side of \eqref{bbprime} is
$$
\int\frac{\widehat{\eta}(\tau-z)-\widehat{\eta e^{-\gamma \cdot}}(\tau)}{\gamma+iz}\widehat{f}(z)dz.
$$
For the contribution of this to the left hand side of \eqref{bbprime}, we use the inequalities
$$\langle \tau\rangle^b\lesssim \langle \tau-z\rangle^b\langle z\rangle^b,\,\,\,\,\,\,\,\,\,\,\frac{1}{|\gamma+iz|}\lesssim \frac{1}{\langle z\rangle},$$
and Young's inequality to get
\begin{align*}
&\Big\|\langle \tau\rangle^b \int\frac{\widehat{\eta}(\tau-z)-\widehat{\eta e^{-\gamma \cdot}}(\tau)}{\gamma+iz}\widehat{f}(z)dz\Big\|_{L^2} \leq \Big\| \langle \tau\rangle^b\int  \frac{|\widehat\eta(\tau-z)|+|\widehat{\eta e^{-\gamma \cdot}}(\tau)|}{\langle z\rangle}  \, |\widehat{f}(z)|dz\Big\|_{L^2}\\
&\lesssim \Big\| \int  \Big(\frac{\langle \tau-z\rangle^b|\widehat\eta(\tau-z)|}{\langle z\rangle^{1-b}}+\frac{\langle \tau\rangle^b|\widehat{\eta e^{-\gamma \cdot}}(\tau)|}{\langle z\rangle} \Big) |\widehat{f}(z)|dz\Big\|_{L^2} \\
&\lesssim \big\|\langle \tau\rangle^b \widehat\eta\big\|_{L^1}
\big\|\langle z\rangle^{b-1}\widehat{f}\big\|_{L^2}+
\big\|\langle \tau\rangle^b \widehat{\eta e^{-\gamma \cdot}}\big\|_{L^2}\big\|\langle z\rangle^{-1}\widehat{f}\big\|_{L^1}
\\
&\lesssim  \big\|\langle z\rangle^{b^\prime}\widehat{f}\big\|_{L^2}+
 \big\|\langle z\rangle^{b^\prime}\widehat{f}\big\|_{L^2}\big\|\langle z\rangle^{-1-b^\prime}\big\|_{L^2}
 \lesssim \|f\|_{H^{b^\prime}}.
\end{align*}
The forth inequality holds since $\eta(t)e^{-\gamma t}$ is a Schwarz function.
The last inequality follows from the fact that  $-1-b^\prime<-1/2$.
\end{proof}

For $|t|<\delta$, where $\delta$  as in Theorem~\ref{thm:I1}, and $b>1/2$,
\begin{align} \label{udelta2}
&\Big\|\int_0^t e^{-\gamma(t-r)}e^{L(t-r)}\mathcal{R}( u)(r)\, dr\Big\|_{H^{s }}
\leq
\Big\|\eta(t)\int_0^t e^{-\gamma(t-r)}e^{L(t-r)}\mathcal{R}( u)(r) \,dr\Big\|_{L^\infty_{t\in [-\delta,\delta]} H^{s}_x} \\\nonumber
& \lesssim \Big\|\eta(t)\int_0^t e^{-\gamma(t-r)}e^{L(t-r)}\mathcal{R}( u)(r) \,dr\Big\|_{X^{s ,b}_\delta}
\lesssim   \|\mathcal{R}( u)\|_{X^{s ,b-1}_\delta}.
\end{align}
\begin{prop}\label{prop:nonlin} For $s\in(0,1)$,  and $\eps>0$ sufficiently small, we have
$$\|\mathcal{R}(u ) \|_{X^{s,-\frac12+\eps}_\delta}\leq C \|u\|_{X^{0,1/2}_\delta}^3.$$
\end{prop}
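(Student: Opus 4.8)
The plan is to bound the trilinear operator $\mathcal{R}(u)$ by duality: we test $\widehat{\mathcal{R}(u)}_k(\tau)$ against an arbitrary function $g \in X^{-s,\frac12-\eps}_\delta$ with $\|g\|=1$, so the claim reduces to an estimate for the $L^2_{k,\tau}$-pairing
\[
\sum_{k_1+k_2+k_3=k}\ \int_{\tau_1+\tau_2+\tau_3=\tau} \frac{|k|^{-s}}{|k_1|}\,\widehat u_{k_1}(\tau_1)\widehat u_{k_2}(\tau_2)\widehat u_{k_3}(\tau_3)\,\overline{\widehat g_k(\tau)}.
\]
Here we extend each $u$ and $g$ off $[-\delta,\delta]$ to realize the restricted norms, which does not affect the frequency/modulation analysis. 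The key structural fact is that on the support of $\mathcal{R}(u)$ the resonance function factors as
\[
\tau - k^3 - \sum(\tau_j - k_j^3) = -3(k_1+k_2)(k_2+k_3)(k_1+k_3),
\]
and this product is nonzero and in fact $\gtrsim \max_j|k_j|$ in absolute value (indeed it is a nonzero integer whose largest factor is comparable to the largest frequency, once one checks the two remaining factors cannot both vanish on this set), so the largest of the four modulations $\langle\tau-k^3\rangle, \langle\tau_j-k_j^3\rangle$ dominates $|k_1||k_2||k_3|$ up to the collision of two frequencies. Since $|k|^{-s}/|k_1| \le |k|^{-s}$ and, using $|k| \le 3\max|k_j|$, one has plenty of room: the factor $\langle\text{max modulation}\rangle^{1/2}$ dwarfs $|k|^{s}$, and absorbing $\langle\text{max modulation}\rangle^{\eps'}$ for small $\eps'$ still leaves a positive power of the large frequency to spare.

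The concrete steps are: first, peel off the $1/|k_1|$ (it only helps) and split into the cases according to which of the four modulations $\langle\tau-k^3\rangle$ or $\langle\tau_j-k_j^3\rangle$ is the largest; by the resonance identity that largest modulation is $\gtrsim |k_1k_2k_3|^{1-}$ after a harmless distribution of the identity. Second, in each case move the gained $\langle\cdot\rangle^{1/2-}$ weight onto whichever of the four functions carries that modulation, converting it (after Cauchy–Schwarz in that variable) into at worst an $X^{0,0}=L^2_{x,t}$ factor with an extra power of the large frequency in hand, which we spend to pay for $|k|^s$. Third, estimate the resulting multilinear $L^2$-expression by Hölder in $(x,t)$ together with the periodic Strichartz estimate of Proposition~\ref{prop:B}: write the product of the three remaining factors as a product of $L^6_{t,x}$ norms, each controlled by $\|u\|_{X^{0^+,1/2^+}_\delta}$, and the fourth ($g$ or a $u$) in $L^2_{t,x}$; since we only ever need $\eps$ derivatives for Strichartz and we have a genuine power of frequency to give away, $s<1$ (in fact any $s<3/2$) is comfortably allowed.

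The main obstacle — and the only place where real care is needed — is the region where two of the frequencies nearly cancel, say $|k_2+k_3|$ small while $|k_1+k_2|,|k_1+k_3|\approx|k_1|$ are large: there the resonance product is only $\gtrsim |k_1|^2$ rather than $\gtrsim|k_1k_2k_3|$, and moreover the summation variable $k_2+k_3=k-k_1$ ranges over a set where $|k_2|,|k_3|$ can both be large while their sum with $k_1$ is moderate. One must check that even this reduced gain $|k_1|^{2(1/2-\eps)}=|k_1|^{1-2\eps}$, combined with the explicit $1/|k_1|$ in the multiplier and the relation $|k|\lesssim|k_1|$ forced on (most of) this region, still beats $|k|^s$ for $s<1$; this is exactly why the definition of $R(u)$ excludes the fully resonant set $S_1\cup S_2\cup S_3$ (which was already extracted as the harmless $\rho_k$ term in Proposition~\ref{thm:dbp}), so on the support of $\mathcal{R}(u)$ at least one of the three factors $(k_i+k_j)$ is $\gtrsim$ the largest frequency and the argument closes. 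Everywhere else the gain is so generous that the bound is immediate.
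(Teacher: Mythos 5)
Your overall architecture is the same as the paper's: dualize against $g\in X^{-s,1/2-\eps}$, use the resonance identity $\sum_i(\tau_i-k_i^3)=3(k_1+k_2)(k_1+k_3)(k_2+k_3)$ to extract a gain from the largest modulation, and close with an $L^6\times L^6\times L^6\times L^2$ H\"older argument via the periodic Strichartz estimate of Proposition~\ref{prop:B}, reserving an $\eps$ of frequency decay for each $L^6$ factor. The gap is in the quantitative core, the multiplier estimate. Writing $P=|k_1+k_2||k_1+k_3||k_2+k_3|$, what has to be shown is
\[
\frac{|k|^{s}}{|k_1|\,P^{1/2-}}\lesssim 1,\qquad s<1,
\]
i.e.\ $|k_1|P^{1/2-}\gtrsim M^{1-}$ with $M$ the largest frequency. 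Your stated lower bounds do not deliver this. The claim that the largest modulation is $\gtrsim |k_1k_2k_3|^{1-}$ is false (take $k_1=k_3=N$, $k_2=-N+1$: then $P=2N$ while $|k_1k_2k_3|\approx N^3$), and the bound you actually justify --- at least one factor is $\gtrsim M$, hence $P\gtrsim M$ --- only yields $|k_1|P^{1/2-}\gtrsim M^{1/2-}$ when $|k_1|=O(1)$, hence only covers $s<1/2$. Moreover, the region you single out as the main obstacle ($|k_2+k_3|$ small, $|k_1+k_2|\approx|k_1+k_3|\approx|k_1|$ large) is not the extremal one: there your own accounting gives a gain $|k_1|\cdot|k_1|^{1-2\eps}$ against $|k|^{s}\approx|k_1|^{s}$, comfortable for any $s<2$. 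The configuration that actually saturates the estimate has two small and two large frequencies, e.g.\ $k_1=k_2=1$, $k_3=N$, $k_4=-(N+2)$: then $|k_1|=1$, $P\approx N^{2}$, and the multiplier is $\approx N^{s-1+O(\eps)}$, so $s<1$ is genuinely forced and your parenthetical claim that any $s<3/2$ is ``comfortably allowed'' is wrong. As written, the case analysis neither identifies nor handles the binding regime.

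The fix is elementary and is exactly what the paper does. From $|k_1||k_1+k_2|\gtrsim|k_2|$ and $|k_1||k_1+k_3||k_2+k_3|\gtrsim|k_2|$ (each factor being a nonzero integer), together with the same inequalities with $k_2,k_3$ swapped and the trivial bound $|k_1|P\geq|k_1|$, one obtains $|k_1|^{2}P\gtrsim M^{2}$ with $M=\max(|k_1|,|k_2|,|k_3|)\gtrsim|k_4|$, whence
\[
|k_1|\,P^{1/2-11\eps}\gtrsim \big(|k_1|P^{1/2}\big)^{1-22\eps}\gtrsim M^{1-22\eps}\gtrsim |k_4|^{1-22\eps},
\]
which beats $|k_4|^{s}$ precisely for $s<1$ and still leaves the factor $|k_1k_2k_3k_4|^{-\eps}$ needed to feed the three $L^6$ Strichartz applications. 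With that substitution for your frequency counting, the rest of your outline coincides with the paper's proof.
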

We will prove this proposition later on. Using \eqref{udelta2} and the proposition above  in
\eqref{udelta}, we see that for $|t|<\delta $, we have
\begin{align*}
\|u(t)-e^{-\gamma t}e^{tL}u_0\|_{H^{s}}&\lesssim
\|f\|+\|u(t)\|^2 +\|u_0\|^2 +\int_0^t \Big(\| u(r)\|^3 +\|f\| +\|u(r)\|^2+\|u(r)\|\|f\|\Big) dr  \\
&+\|u\|_{X^{0,1/2}_\delta}^3.
\end{align*}
In the rest of the proof the implicit constants depend on $\|u_0\|, \|f\|, \gamma$, and $s$.
Fix $t$ large.  For $r\leq t$, we have the bound
$$\|u(r)\| \leq \|u_0\|+\|f\|/\gamma.$$
Thus, by the inequality above and the local theory, with $\delta \approx \langle \|u_0\|+\|f\|/\gamma \rangle^{-\alpha}$ (for some $\alpha>0$), we have
$$
\|u(j\delta)-e^{\delta (L-\gamma)}u((j-1)\delta)\|_{H^{s}}\lesssim 1,
$$
for any $j\leq t/\delta$. Here we used the local theory bound
$$
\|u\|_{X^{0,1/2}_{[(j-1)\delta,\,j\delta]}} \lesssim \|u((j-1)\delta)\|\leq \|u_0\|+\|f\|/\gamma.
$$
Using this we obtain (with $J=t/\delta$)
\begin{align*}
&\|u(J\delta)-e^{J \delta (L-\gamma)}u(0)\|_{H^{s}} \leq \sum_{j=1}^J\|e^{(J-j) \delta (L-\gamma)}u(j\delta)-e^{(J-j+1) \delta (L-\gamma)}u((j-1)\delta)\|_{H^{s}}\\
&= \sum_{j=1}^Je^{-(J-j) \delta \gamma}\| u(j\delta)-e^{ \delta (L-\gamma)}u((j-1)\delta)\|_{H^{s}}\lesssim  \sum_{j=1}^Je^{-(J-j) \delta \gamma} \lesssim \frac{1}{1-e^{-\delta\gamma}}.
\end{align*}
This completes the proof of the global bound stated in Theorem~\ref{theo:main1}. Finally the continuity of $u(t)-e^{t(L-\gamma)}g$ in $H^{s}$ follows as in \cite{ET}, we omit the details.

\section{Proof of Proposition~\ref{prop:nonlin}}
Recall that
$$\mathcal{R}(u )(r,x)=\sum_{k\neq 0} \mathcal{R}(u )_k(r)  e^{ikx}.$$
We need to prove  that
$$
\|\mathcal{R}(u )\|_{X^{s,-1/2+\eps}_\delta}\lesssim \|u\|_{X^{0,1/2}_\delta}^3.
$$
As usual this follows by considering the  $X^{s,b}$ norms instead of the restricted versions.
By duality it suffices to prove that
\begin{align}\label{dual}
\Big|\sum_k \int_{\mathbb R}\widehat{\mathcal R }(k,\tau)\widehat g(-k,-\tau)d\tau \Big|&=\Big|\int_{\mathbb R\times \mathbb T} \mathcal R(u ) g \Big| \lesssim \|u\|_{X^{0,1/2}}^3 \|g\|_{X^{-s,1/2-\eps}}.
\end{align}
We note that
$$
\widehat{\mathcal{R}}(k,\tau)=\frac{i}{3}\int_{\tau_1+\tau_2+\tau_3=\tau}\sum_{\stackrel{k_1+k_2+k_3=k}{(k_2+k_3)(k_1+k_2)(k_1+k_3)\neq 0}} \frac{\hat u(k_1,\tau_1)\hat u(k_2,\tau_2)\hat u(k_3,\tau_3)}{k_1}.
$$
Let
\begin{align*}
f_1(k,\tau)&=f_2(k,\tau)=f_3(k,\tau)=|\widehat u(k,\tau)|  \langle \tau-k^3\rangle^{1/2},\\
f_4(k,\tau)&=|\widehat g(k,\tau)| |k|^{-s} \langle \tau-k^3\rangle^{1/2-\eps}.
\end{align*}
Note that \eqref{dual} follows from
\be\label{dual1}
\sum_{\stackrel{k_1+k_2+k_3+k_4=0}{(k_2+k_3)(k_1+k_2)(k_1+k_3)\neq 0}} \int_{\tau_1+\tau_2+\tau_3+\tau_4=0}\frac{ |k_4|^{s }\prod_{i=1}^4 f_i(k_i,\tau_i)}{|k_1|  \prod_{i=1}^4\langle\tau_i-k_i^3\rangle^{1/2-\eps}}\lesssim \prod_{i=1}^4\|f_i\|_2.
\ee
By Proposition~\ref{prop:B}, we have (for any $\eps>0$)
\be\label{L6}
\Big\|\Big(\frac{f_i |k|^{-\eps}}{\langle \tau-k^3\rangle^{1/2+\eps}}\Big)^\vee \Big\|_{L^6(\mathbb R \times \mathbb T)}\lesssim \|f_i\|_2.
\ee
Using $\tau_1+\tau_2+\tau_3+\tau_4=0$ and $k_1+k_2+k_3+k_4=0$, we have
$$
\sum_{i=1}^4 \tau_i-k_i^3=-k_1^3-k_2^3-k_3^3-k_4^3=3(k_1+k_2)(k_1+k_3)(k_2+k_3).
$$
Therefore
$$
\max_{i=1,2,3,4} \langle \tau_i-k_i^3 \rangle \gtrsim |k_1+k_2| |k_1+k_3| |k_2+k_3|.
$$
Since the inequality \eqref{dual1} is symmetric in $f_i$'s, it does not matter which of these terms is the maximum. Therefore without loss of generality we assume that
$$
\langle \tau_1-k_1^3\rangle =\max_{i=1,2,3,4} \langle \tau_i-k_i^3 \rangle  \gtrsim |k_1+k_2| |k_1+k_3| |k_2+k_3|.
$$
This implies that
\be\label{maxtau}
\prod_{i=1}^4\langle\tau_i-k_i^3\rangle^{1/2-\eps}\gtrsim (|k_1+k_2| |k_1+k_3| |k_2+k_3|)^{1/2-7\eps}\prod_{i=2}^4\langle\tau_i-k_i^3\rangle^{1/2+\eps}.
\ee
Also note that (since all factors are nonzero and $k_1+k_2+k_3+k_4=0$)
\be\label{kis}
|k_1+k_2| |k_1+k_3| |k_2+k_3|\gtrsim |k_i|,\,\,\,\,i=1,2,3,4.
\ee
Now we will prove that for $s\in(0,1)$, and for $\eps$ sufficiently small,
\be\label{multiplier}
\frac{ |k_4|^{s } }{|k_1| (|k_1+k_2| |k_1+k_3| |k_2+k_3|)^{1/2-7\eps}} \lesssim |k_1k_2k_3k_4|^{-\eps}.
\ee
By \eqref{kis}, this follows from
$$\frac{ |k_4|^{s } }{|k_1| (|k_1+k_2| |k_1+k_3| |k_2+k_3|)^{1/2-11\eps}}\lesssim 1.
$$
To prove this let $M=\max(|k_1|,|k_2|,|k_3|)$. Using $|k_1||k_1+k_2|\gtrsim|k_2|$ and $|k_1||k_1+k_3||k_3+k_2|\gtrsim|k_2|$, and by symmetry of $k_2, k_3$, we have
$$|k_1| (|k_1+k_2| |k_1+k_3| |k_2+k_3|)^{1/2-11\eps}\gtrsim M^{1-22\eps}\gtrsim |k_4|^{1-22\eps}.$$

This finishes the proof of \eqref{multiplier}. Using \eqref{multiplier} and \eqref{maxtau} in \eqref{dual1} (and eliminating $|k_1|^{-\eps}$), we obtain
\begin{align*}
\eqref{dual1}&\lesssim   \sum_{k_1+k_2+k_3+k_4=0 } \int_{\tau_1+\tau_2+\tau_3+\tau_4=0}\frac{|k_2k_3k_4|^{-\eps} \prod_{i=1}^4 f_i(k_i,\tau_i)}{  \prod_{i=2}^4\langle\tau_i-k_i^3\rangle^{1/2+\eps}}.
\end{align*}
By Plancherel, and the convolution structure we can rewrite this as
\begin{align*}
\int_{\mathbb R \times \mathbb T}\widehat{f_1}(x,t) \prod_{i=2}^4\Big(\frac{f_i |k|^{-\eps}}{\langle \tau-k^3\rangle^{1/2+\eps}}\Big)^\vee(x,t) &\leq \|f_1\|_{L^2(\mathbb R \times \mathbb T)} \prod_{i=2}^4\Big\|\Big(\frac{f_i |k|^{-\eps}}{\langle \tau-k^3\rangle^{1/2+\eps}}\Big)^\vee \Big\|_{L^6(\mathbb R \times \mathbb T)}  \\ &
\lesssim \prod_{i=1}^4\|f_i\|_2.
\end{align*}
In the last inequality we used \eqref{L6}.

\section{Proof of Theorem~\ref{thm:attractor}}\label{sec:attract}
First of all note that the existence of an absorbing set, $\mathcal B_0$, is immediate from \eqref{L2bound}. Second, we need to verify the assymptotic compactness of the propagator $U(t)$. To see this note that by Theorem~\ref{theo:main1},
$$U(t)u_0=e^{-\gamma t+Lt}u_0+ N(t)u_0$$
where $N(t)u_0$ is in a ball in $H^s$ with radius depending on $s, \gamma, \|u_0\|, \|f\|$. By Rellich's theorem, $\{N(t)u_0:t>0\}$ is precompact in $L^2$. Since the first summand is continuous and goes to zero as $t \to \infty$ in $L^2$, we conclude that  $\{U(t)u_0:t>0\}$ is precompact in $L^2$, which is stronger then asymptotic compactness. This and theorem A imply the existence of a global attractor $\mathcal A\subset L^2$.

We now prove that the attractor set $\mathcal A$ is a compact subset of $H^s$ for any $s\in(0,1)$. By Rellich's theorem, it suffices to prove that for any $s\in(0,1)$, there exists a closed ball $B_s\subset H^s$ of radius $C(s,\gamma,\|f\|)$ such that  $\mathcal A\subset B_s$.
By definition
$$
\mathcal A=\bigcap_{\tau\geq 0}\overline{\bigcup_{t\geq \tau} U(t)\mathcal B_0}=: \bigcap_{\tau\geq 0} U_\tau.
$$
Let $B_s$ be the ball of radius $C(s,\gamma,\|f\|)$ (as in Corollary~\ref{cor:thm1}) centered at zero in $H^s$.
By Corollary~\ref{cor:thm1}, for $\tau>T$, $U_\tau$ is contained in a $\delta_\tau$ neighborhood $N_\tau$ of $B_s$ in $L^2$, where $\delta_\tau\to 0$ as $\tau$ tends to infinity. Since   $B_s$ is a compact subset of
$L^2$, we have
$$
\mathcal A = \bigcap_{\tau\geq 0} U_\tau\subset \bigcap_{\tau\geq 0} N_\tau =B_s.
$$

\end{document}